\tikzset{every picture/.style={line width=0.75pt}} 
\tikzset{
    pattern size/.store in=\mcSize, 
    pattern size = 5pt,
    pattern thickness/.store in=\mcThickness, 
    pattern thickness = 0.3pt,
    pattern radius/.store in=\mcRadius, 
    pattern radius = 1pt
}
\pgfpoint{\mcSize}{\mcSize}}
\numberwithin{equation}{section}
\theoremstyle{plain}
\newtheorem{theorem}{Theorem}[section]
\newtheorem{lemma}[theorem]{Lemma}
\newtheorem{proposition}[theorem]{Proposition}
\newtheorem{algorithm}  [theorem]{Algorithm}
\theoremstyle{definition}
\newtheorem{definition}{Definition}[section]
\newtheorem{assumption}{Assumption}[section]
\newtheorem{example}{Example}[section]
\theoremstyle{remark}
\newtheorem*{remark}{Remark}
\newcommand{\norm}[1]{\left\|#1\right\|}
\newcommand{\abs}[1]{\left\vert#1\right\vert}
\newcommand{\spr}[1]{\left\langle\,#1\,\right\rangle}
\newcommand{\kl}[1]{\left(#1\right)}
\newcommand{\Kl}[1]{\left\{#1\right\}}
\newcommand{\R}{\mathbb{R}} 
\newcommand{\N}{\mathbb{N}}
\newcommand{\yd}{y^{\delta}}
\newcommand{\s}{\text{s}}
\newcommand{\Rc}{\mathcal{R}}
\newcommand{\Tad}{T_\alpha^\delta}
\newcommand{\Tadt}{\mathcal{T}_\alpha^\delta}
\newcommand{\lp}{{\ell_p}}
\newcommand{\Rm}{{\R^m}}
\newcommand{\Rn}{{\R^n}}
\newcommand{\vd}{v^\delta}
\newcommand{\Rl}{\mathcal{R}_{l}}
\newcommand{\Rt}{{\mathbb{R}^2}}
\newcommand{\LtRn}{{L_2(\R^n)}}
\newcommand{\prox}{\operatorname{prox}}
\newcommand{\D}{\mathcal{D}}
\newcommand{\tto}{\rightrightarrows}
\newcommand{\Rspf}{R^{\operatorname{sp}}}
\newcommand{\RTVf}{R^{\operatorname{TV}}}
\newcommand{\BVl}{\operatorname{BV}^l}
\newcommand{\BspqRn}{{B^{s}_{p,q}(\Rn)}}
\newcommand{\BsppRn}{{B^{s}_{p,p}(\Rn)}}
\newcommand{\BspRn}{{B^{s}_{p}(\Rn)}}
\newcommand{\F}{\mathcal{F}}
\renewcommand{\S}{\mathcal{S}}
\newcommand{\oR}{(-\infty,\infty]}
\newcommand{\dist}[1]{{\rm dist}(#1)}
\newcommand{\mv}{\,\mid\,}
\newcommand{\Bmv}{\,\Big\vert\,}
\newcommand{\B}{{\cal B}}
\newcommand{\I}{{\cal I}}
\newcommand{\Sp}{{\mathcal S}}
\newcommand{\Z}{{\cal Z}}
\newcommand{\Lag}{{\cal L}}
\newcommand{\longsetto}[1]{\mathop{\longrightarrow}\limits^{#1}}
\newcommand{\skalp}[1]{\langle #1\rangle}
\newcommand{\xb}{\bar x}
\newcommand{\yb}{\bar y}
\newcommand{\lb}{\bar\lambda}
\newcommand{\AT}[2]{{\textstyle{#1\atop#2}}}
\newcommand{\xba}{{\bar x^\ast}}
\newcommand{\oo}{o}
\newcommand{\OO}{{\cal O}}
\newcommand{\argmin}{\mathop{\rm arg\,min}}
\newcommand{\cl}{{\rm cl\,}}
\newcommand{\co}{{\rm conv\,}}
\newcommand{\gph}{\mathrm{gph}\,}
\newcommand{\dom}{\mathrm{dom}\,}
\newcommand{\SCD}{SCD\ }
\newcommand{\onabla}{\overline\nabla}
\newcommand{\ssstar}{semismooth$^{*}$ }
\newcommand{\ee}[2]{{#1}^{(#2)}}
\newcommand{\ZnP}{\Z_n^{P,W}}
\newcommand{\phiFB}[1]{\varphi_{\ee \lambda{#1}}^{\rm FB}(\ee x{#1})}
\newcommand{\rge}{{\rm rge\;}}
\newlength{\myAlgBox}
\title{On SCD Semismooth$^*$ Newton methods for the efficient minimization of Tikhonov functionals with non-smooth and non-convex penalties}
\author{
Helmut Gfrerer\footnote{Johannes Kepler University Linz, Institute of Numerical Mathematics, Altenbergerstra{\ss}e~69, A-4040 Linz, Austria, (gfrerer@numa.uni-linz.ac.at)} ,
Simon Hubmer\footnote{Johannes Kepler University Linz, Institute of Industrial Mathematics, Altenbergerstra{\ss}e 69, A-4040 Linz, Austria, (simon.hubmer@jku.at)} ,
Ronny Ramlau\footnote{Johannes Kepler University Linz, Institute of Industrial Mathematics, Altenbergerstra{\ss}e 69, A-4040 Linz, Austria, (ronny.ramlau@jku.at)} \footnote{Johann Radon Institute Linz, Altenbergerstra{\ss}e 69, A-4040 Linz, Austria, (ronny.ramlau@ricam.oeaw.ac.at)} 
}
\begin{document}

\maketitle

\begin{abstract}

We consider the efficient numerical minimization of Tikhonov functionals with nonlinear operators and non-smooth and non-convex penalty terms, which appear for example in variational regularization. For this, we consider a new class of SCD semismooth$^*$ Newton methods, which are based on a novel concept of graphical derivatives, and exhibit locally superlinear convergence. We present a detailed description of these methods, and provide explicit algorithms in the case of sparsity and total-variation penalty terms. The numerical performance of these methods is then illustrated on a number of tomographic imaging problems.

\smallskip
\noindent \textbf{Keywords.} Inverse and Ill-Posed Problems, Tikhonov Regularization, Convex Constraints, Sparsity and TV Regularization, Non-Differentiable Optimization

\end{abstract}


\section{Introduction}\label{sect_introduction}

In this paper, we consider the problem of efficiently minimizing the finite dimensional Tikhonov functional
    \begin{equation}\label{Tikhonov_finite}
        \Tad(x) := S\kl{F(x),\yd} + \alpha R(x) \,,
    \end{equation}
where $F : \D(F) \subset \Rn \to \Rm$ is a continuously differentiable operator, $R : \Rn \to \R$ is a regularization functional, $\alpha$ is the regularization parameter, $S: \Rm\times \Rm \to \R$ is a data fidelity term, and $\yd \in \Rm$ the measured data. Tikhonov (or variational regularization) functionals like \eqref{Tikhonov_finite} are a common way of regularizing ill-posed problems of the form
    \begin{equation}\label{Gx=y}
        F(x) = \yd \,,
    \end{equation}
and thus appear consistently throughout many tomographic and medical imaging applications \cite{Engl_Hanke_Neubauer_1996,Scherzer_Grasmair_Grossauer_Haltmeier_Lenzen_2008, Louis_1989, Engl_Ramlau_2015}. Often, but not always, the finite dimensional inverse problem \eqref{Gx=y} results from the discretization of an infinite dimensional operator equation 
    \begin{equation}\label{Fx=y}
        \F(u) = \vd \,, 
    \end{equation}
where $\F : \D(\F) \subset X \to Y$ is a nonlinear operator between two real or complex Banach or Hilbert spaces $X$ and $Y$. In this case, \eqref{Tikhonov_finite} is the discrete analog to the functional
    \begin{equation}\label{Tikhonov_infinite}
        \Tadt(u) := \S(\F(u),\vd) + \alpha \Rc(u) \,,
    \end{equation}
where $\S : Y \times Y \to \R$ and $\Rc : X \to \R \cup \Kl{\infty}$. For a detailed overview on theoretical aspects such as convergence and convergence rates for variational methods like \eqref{Tikhonov_finite} and \eqref{Tikhonov_infinite}, we refer to the monographs \cite{Scherzer_Grasmair_Grossauer_Haltmeier_Lenzen_2008,Schuster_Kaltenbacher_Hofmann_Kazimierski_2012,Kaipio_Somersalo_2005,Engl_Hanke_Neubauer_1996,Engl_Ramlau_2015}. As discussed there, depending on the choice of the regularizer, different emphasis can be placed on reconstructing, e.g., sparse, smooth, or piecewise constant solutions \cite{Scherzer_Grasmair_Grossauer_Haltmeier_Lenzen_2008,Schuster_Kaltenbacher_Hofmann_Kazimierski_2012}, or ones which satisfy certain statistical properties \cite{Kaipio_Somersalo_2005}. Whatever the concrete setting, in an application one is eventually faced with numerically minimizing a discrete Tikhonov functional of the form \eqref{Tikhonov_finite}.

For this, a wide range of methods from the field of optimization can be used \cite{Bonnans_Gilbert_Lemarechal_Sagastizabal_2006,Bauschke_Combettes_2017}. Their applicability again depends on whether the operator $F$ is linear or nonlinear, whether $R$ is convex or non-convex, and whether $F$ and $R$ are continuously differentiable or not. If they are differentiable, then both first and second order methods can be used, which typically perform well without many adaptations. Popular choices are for example the classic gradient descent method with a suitably chosen stepsize, or Newton's method applied to the first-order optimality condition
    \begin{equation}\label{first_order_Tad}
        \nabla \Tad(x) = 0 \,.
    \end{equation}
For the classic Newton's method to be applicable to \eqref{first_order_Tad}, $\Tad$ and thus both $F$ and $R$ have to be twice continuously differentiable. Since this regularity requirement on $R$ is often unrealistic, one may instead use so-called semismooth Newton methods, which then only require $\nabla \Tad$ to have a generalized (or Newton) derivative \cite{Hintermueller_Ito_Kunisch_2002,Chen_Nashed_Qi_2000,QiSun93,Ulbrich_2002}.

However, in many cases $R$ is not even differentiable once, but only the subgradient $\partial R$ exists. In this case, the first-order optimality condition \eqref{first_order_Tad} becomes the inclusion
    \begin{equation}\label{first_order_optimality_nonsmooth_finite}
        0 \in \partial \, \Tad(x) = \partial \kl{ S(F(x),\yd)} + \alpha \partial R(x)\,.
    \end{equation}
This set-valued inclusion problem can no longer be tackled by standard semismooth Newton methods. If in this case $R$ is at least convex, then algorithms from convex optimization \cite{Beck_2017,Bauschke_Combettes_2017} can be used to minimize \eqref{Tikhonov_finite}. Popular examples are proximal gradient or subgradient methods, the forward-backward method, implicit splitting methods such as the Douglas-Rachford algorithm, the primal-dual method of Pock and Chambolle \cite{Chambolle_Pock_2010}, and alternating methods, among others \cite{Bauschke_Combettes_2017}. However, due to the non-differentiability of $R$, these algorithms are typically rather slow without additional modifications such as Nesterov acceleration \cite{Nesterov_1983,Beck_Teboulle_2009,Attouch_Peypouquet_2016}. Furthermore, while not always strictly necessary in practical applications, the theoretical analysis of these methods usually also require the functional $S(F(\cdot),\yd)$ to be differentiable and convex, which further limits their use.

In this paper, we thus propose a new method for the minimization of \eqref{Tikhonov_finite} for penalty functionals $R$ which are both non-smooth and non-convex. For this, we adapt the so-called \emph{subspace-containing derivative (SCD) semismooth$^*$ Newton method}, which was recently introduced in \cite{GfrManOutVal22,GfrOut21} to solve general set-valued equations of the form
    \begin{equation*}
        0 \in G(x) \,,
    \end{equation*}
where $G : \Rn \tto \Rn$ is a set-valued mapping. Based on a generalized concept of derivatives following certain graphical considerations, which enables the computation of higher order derivatives of functions which are non-differentiable in the classic sense, semismooth$^*$ Newton methods can be shown to converge locally superlinearly to stationary points under minimal assumptions. Hence, we propose to adapt these methods for the minimization of the Tikhonov functional \eqref{Tikhonov_finite} with non-smooth and non-convex regularisation functionals $R$ by applying them to the set-valued first-order optimality equation \eqref{first_order_optimality_nonsmooth_finite}. Besides a detailed description of the resulting method, including a discussion of its excellent convergence properties, we also derive explicit algorithms for specific choices of $R$, including the case of sparsity and TV penalty terms. This will in particular lead to a new efficient algorithm for $\lp$ regularization for the entire range of $0 \leq p < \infty$. Finally, we apply our derived methods to a number of tomographic imaging problems, demonstrating their overall behaviour as well as their excellent performance.

The outline of this paper is as follows: In Section~\ref{sect_background_variational}, we recall some background on variational analysis, which is necessary for discussing the SCD semismooth$^*$ Newton method in Section~\ref{sect_semismooth}. In Section~\ref{sect_semismooth_Tikhonov}, we discuss the application of this method for minimizing general Tikhononv functionals, and in Section~\ref{sect_appl_sparsity_TV}, we derive explicit forms of this method for the particular cases of sparsity and TV penalty terms. Finally, in Section~\ref{sect_numerics}, we present some numerical experiments where our derived methods are applied to a number of tomographic imaging problems, before a brief conclusion in Section~\ref{sect_conclusion}.

\section{Background on variational analysis}\label{sect_background_variational}

In this section, we recall some necessary definitions from variational analysis. First of all, given a set $\Omega\subset\R^n$ and a point $\xb\in \Omega$, the \emph{tangent cone} to $\Omega$ at $\xb$ is defined as
    \begin{equation*}
        T_\Omega(\xb):=\Kl{x\in\R^n \mv \exists \, t_k \in \R\,, t_k\downarrow 0\,, x_k \to x \, \text{ with } \, \xb+t_k x_k\in \Omega \ \forall \, k } \,.
    \end{equation*}
Next, consider a function $q:\R^n\to\oR$ and a point $\xb\in \dom q$, where
    \begin{equation*}
        \dom q :=\{x\in\R^n\mv q(x)<\infty\} \,.
    \end{equation*}
Then, the \emph{regular subdifferential} of $q$ at $\xb$ is defined by
    \begin{equation*}
        \widehat\partial q(\xb):=\Kl{x^*\in\R^n \, \Big\vert\, \liminf_{x\to\xb}\frac{q(x)-q(\xb)-\skalp{x^*,x-\xb}}{\norm{x-\xb}}\geq 0} \,, 
    \end{equation*}
while the \emph{(limiting) subdifferential} is defined by
    \begin{equation*}
        \partial q(\xb):=\Kl{x^*\in\R^n\,\big\vert\, \exists \, (x_k, x_k^*)\to(\xb,x^*) \mbox{ with }x_k^*\in\widehat \partial q(x_k)\ \forall \, k \, \mbox{ and }\lim_{k\to\infty} q(x_k)=q(\xb)} \,.
    \end{equation*}
Note that if $q$ is convex, then both the regular and the limiting subdifferential coincide with the classical definition of subdifferentials used in convex analysis \cite{Bauschke_Combettes_2017}, i.e.,
    \begin{equation*}
        \partial q(\xb) = \Kl{x^* \in \R^n \, \vert \, q(x) - q(\xb) - \spr{x^*,x-\xb} \geq 0 \,,
        \quad \forall x \in \R^n } \,.
    \end{equation*}

Next, for $\xba\in\partial q(\xb)$, the function $q$ is said to be \emph{prox-regular} at $\xb$ for $\xba$, if there exist $\epsilon> 0$ and $\rho\geq 0$ such that for all $x',x\in \B_{\epsilon}(\xb)$ with $\vert q(x)-q(\xb)\vert\leq \epsilon$ one has
    \begin{equation}\label{EqProxRegDef}
        q(x')\geq q(x)+\skalp{x^*,x'-x}-\frac \rho2 \norm{x'-x}^2\,, \quad\mbox{whenever}\quad x^*\in \partial q(x)\cap \B_\epsilon(\xba) \,.
    \end{equation}
If this holds for all $\xba \in \partial q(\xb)$, then $q$ is said to be prox-regular at $\xb$. Furthermore,  $q$ is called \emph{subdifferentially continuous} at $\xb$ for $\xba$, if for any sequence $(x_k,x_k^*)$ there holds
    \begin{equation*}
        (x_k,x_k^*)\longsetto{{\gph \partial q}}(\xb,\xba) \qquad \Longrightarrow \qquad \lim_{k\to\infty}q(x_k)=q(\xb) \,. 
    \end{equation*}
If this property holds for all $\xba \in \partial q(\xb)$, then $q$ is said to be subdifferentially continuous  at $\xb$. Note that lower semicontinuous (lsc) convex functions $q$ are both prox-regular and subdifferentially continuous at every $x$ for every $x^*\in\partial q(x)$.

Furthermore, the function $q$ is said to be \emph{prox-bounded}, if there is a $\lambda>0$ such that 
    \begin{equation*}
        \exists \, x \in\R^n \,: \quad \inf_z \Kl{ \frac1{2\lambda}\norm{z-x}^2+q(z) } > -\infty \,.
    \end{equation*}
The supremum of the set of all such $\lambda$ is denoted by $\lambda_q$, the threshold for the prox-boundedness of $q$. Finally, the \emph{proximal mapping} $ \prox_{\lambda q}:\R^n\tto\R^n$ is defined by
    \begin{equation*}
        \prox_{\lambda q}(x):=\argmin_z \Kl{ \frac1{2}\norm{z-x}^2+\lambda q(z)} \,.
    \end{equation*}

\section{The SCD semismooth* Newton method}\label{sect_semismooth}

The subspace containing derivative \emph{(SCD) semismooth* Newton method} was recently introduced in \cite{GfrOut22a} for solving set-valued inclusions. In this section, we present some basic results on this method and discuss its application to general optimization problems.

\subsection{The SCD semismooth* Newton method for inclusions}\label{subsect_semismooth_inclusions}

The general SCD semismooth* Newton method aims to solve the set-valued inclusion
    \begin{equation}\label{EqInclG}
        0\in G(x) \,,
    \end{equation}
where $G:\R^n\tto \R^n$ is a set-valued mapping. To motivate the derivation of the SCD semismooth* Newton method, we first recall the well-known semismooth Newton method as introduced by Qi and Sun \cite{QiSun93} for solving equations
    \begin{equation}\label{EqNonlEqH}
        H(x)=0\,,
    \end{equation}
where $H:D\to\R^n$ is a locally Lipschitz continuous mapping defined on an open set $D\subset\R^n$. For this, let
    \begin{equation*}
        \OO_H:=\Kl{x\in\R^n\mv H\mbox{ is (Fr\'echet) differentiable at }x} \,.
    \end{equation*}
By Rademacher's Theorem, see, e.g.,\cite{RoWe98}, the set $D\setminus \OO_H$ is negligible and therefore $\OO_H$ is dense in $D$. Since $H$ is assumed to be locally Lipschitz continuous, for any $x\in D$ and any sequence $x_k\longsetto{\OO_H}x$ the sequence $\nabla H(x_k)$ is bounded, and thus has some accumulation point. From this, it follows that for every $x\in D$ the set
    \begin{equation*}
        \overline\nabla H(x):=\Kl{A\in\R^{n\times n}\mv \exists \, x_k\longsetto{\OO_H}x\mbox{ such that }A=\lim_{k\to\infty}\nabla H(x_k)} 
    \end{equation*}
is a nonempty and compact set, which is called the \emph{B-Jacobian (B-differential)} of $H$ at $x$. By taking the convex hull, $\co\overline\nabla H(x)$, we arrive at the \emph{Clarke generalized Jacobian} of $H$ at $x$. Following \cite{QiSun93}, we can introduce the concept of semismoothness.

\begin{definition}
The mapping $H : D \to \R^n$ is called \emph{semismooth} at $\xb\in D$ if the limit 
    \begin{equation*}
        \lim_{\AT{A\in\co\onabla H(\xb+th')}{h'\to h,\ t\downarrow 0}}Ah' 
    \end{equation*}
exists for every $h \in \R^n$. 
\end{definition}

Note that $H$ is semismooth at $\xb\in D$ if and only if the directional derivative
    \begin{equation*}
        H'(\xb;h):=\lim_{t\downarrow 0}\frac{H(\xb+th)-H(\xb)}{t}
    \end{equation*}
exists for all $h\in\R^n$ and there holds
    \begin{equation}\label{EqSS}
        \sup_{A\in \co\onabla H(x)}\norm{H(x)-H(\xb)-A(x-\xb)}=\oo(\norm{x-\xb}) \,.
    \end{equation}
With this, we are now able to define and analyse the semismooth Newton method.

\begin{theorem}[\cite{QiSun93}]\label{ThConvSS} Let $D\subset\R^n$ be open and let $H:D\to\R^n$ be locally Lipschitz continuous. Furthermore, assume that $\xb\in D$ is a solution of \eqref{EqNonlEqH} with 
\eqref{EqSS}, and assume that all matrices $A\in\co\onabla H(\xb)$ are nonsingular. Then, there exists a neighborhood $V$ of $\xb$ such that for every starting point $\ee x0\in V$ the \emph{semismooth Newton} iteration 
    \begin{equation}\label{EqSSNewton} 
        \ee x{k+1}=\ee xk-{\ee Ak}^{-1}H(\ee xk)  \,,\qquad \mbox{ with }\ee Ak\in\co\onabla H(\ee xk) \,,
    \end{equation}
is well defined and the resulting sequence $\ee xk$ converges superlinearly to $\xb$.
\end{theorem}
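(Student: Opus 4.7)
The plan is to combine three ingredients into the standard Newton-type convergence argument. First, I would show that the pointwise nonsingularity on $\co\onabla H(\xb)$ propagates to a uniform inverse bound $\norm{A^{-1}} \leq M$ for every $A \in \co\onabla H(x)$ with $x$ in some neighborhood $V$ of $\xb$. This immediately gives well-definedness of \eqref{EqSSNewton} on $V$. Then, using $H(\xb)=0$, the semismoothness estimate \eqref{EqSS} applied at the current iterate controls the Newton residual by $\oo(\norm{\ee xk - \xb})$, which combined with the uniform bound produces the superlinear recursion.

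For the uniform invertibility, I would argue by contradiction: if no such $V$ and $M$ exist, there are sequences $x_k \to \xb$ and $A_k \in \co\onabla H(x_k)$ with $A_k$ either singular or $\norm{A_k^{-1}} \to \infty$. Local Lipschitz continuity of $H$ bounds $\norm{\nabla H(y)}$ uniformly for $y \in \OO_H$ near $\xb$, which in turn bounds $\onabla H$ and its convex hull in a neighborhood of $\xb$, so after passing to a subsequence $A_k \to A^*$. The outer-semicontinuity of $\onabla H$ (immediate from its definition as a limit of Jacobians), combined with a Carath{\'e}odory argument for the convex hull, then places $A^* \in \co\onabla H(\xb)$. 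Picking unit vectors $v_k$ with $\norm{A_k v_k} \to 0$ and passing to a limit shows $A^*$ is singular, contradicting the hypothesis.

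Having secured $V$ and $M$, I would write the iteration error as
\begin{equation*}
    \ee x{k+1} - \xb = -{\ee Ak}^{-1}\kl{H(\ee xk) - H(\xb) - \ee Ak(\ee xk - \xb)}\,,
\end{equation*}
so that $\norm{\ee x{k+1} - \xb} \leq M\,\norm{H(\ee xk) - H(\xb) - \ee Ak(\ee xk - \xb)}$. Since $\ee Ak \in \co\onabla H(\ee xk)$, the right-hand side is covered by the supremum in \eqref{EqSS} and hence is $\oo(\norm{\ee xk - \xb})$. Shrinking $V$ if necessary so that this $\oo$-term is bounded by $\norm{\ee xk - \xb}/(2M)$ for all $\ee xk \in V$ gives $\norm{\ee x{k+1} - \xb} \leq \tfrac{1}{2}\norm{\ee xk - \xb}$, which inductively keeps the sequence in $V$; the superlinear rate $\norm{\ee x{k+1} - \xb}/\norm{\ee xk - \xb} \to 0$ then follows directly from the $\oo$-estimate. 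The main obstacle is really the first step, since the transfer from pointwise nonsingularity on the limiting set $\co\onabla H(\xb)$ to a uniform inverse bound on the nearby sets $\co\onabla H(x)$ requires careful handling of both the outer-semicontinuity of $\onabla H$ and the stability of convex hulls under convergent sequences of matrices drawn from varying sets.
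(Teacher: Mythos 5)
Your proposal is correct and follows the classical argument of Qi and Sun: uniform invertibility of $\co\onabla H(x)$ near $\xb$ via local boundedness, outer semicontinuity and Carath\'eodory, followed by the error identity $\ee x{k+1}-\xb=-{\ee Ak}^{-1}\kl{H(\ee xk)-H(\xb)-\ee Ak(\ee xk-\xb)}$ and the semismoothness estimate \eqref{EqSS}. The paper itself gives no proof of Theorem~\ref{ThConvSS} but simply cites \cite{QiSun93}, and your argument is essentially the proof found there, so there is nothing to add.
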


Next, we want to generalize the semismooth Newton method \eqref{EqSSNewton} to set-valued mappings $G:\R^n\tto\R^n$. In the following analysis, the \emph{graph of $G$}, defined by
    \begin{equation*}
        \gph G:=\{(x,y)\mv y\in G(x)\} \,,
    \end{equation*}
plays an important role. Note that the locally Lipschitzian mapping $H:D\to\R^n$ is Fr\'echet differentiable at $x\in D$, if and only if the tangent cone to $\gph H$ at $(x, H(x))$ is an $n$-dimensional subspace of $\R^n\times\R^n$. In this case, there holds
    \begin{equation*}
        T_{\gph H}(\xb, H(\xb))=\rge(I,\nabla H(x)):=\Kl{(p,\nabla H(\xb)p)\mv p\in\R^n} \,.
    \end{equation*}
This leads to the following definition of graphical smoothness.

\begin{definition}
The set-valued mapping $G:\R^n\tto\R^n$ is called \emph{graphically smooth of dimension $n$} at $(x,y)\in\gph G$, if the tangent cone $T_{\gph G}(x,y)$ is an $n$-dimensional subspace of $\R^n\times \R^n$. Furthermore, $\OO_G$ denotes the set of all points $(x,y)\in\gph G$ such that $G$ is graphically smooth of dimension $n$ at $(x,y)$.
\end{definition}
 
Next, we consider the metric space $\Z_n$ of $n$-dimensional subspaces of $\R^n\times\R^n$, equipped with the metric
    \begin{equation*}
        d_{\Z_n}(L_1,L_2):=\norm{P_1-P_2} \,,
    \end{equation*}
where $P_i$, $i=1,2$ denote the orthogonal projections onto the subspaces $L_i$. Since orthogonal projections are uniformly bounded linear operators, and since the limit of orthogonal projections on $n$-dimensional subspaces is again an orthogonal projection onto an $n$-dimensional subspace, the metric space $\Z_n$ is compact. This allows the construction of a \emph{subspace containing (SC) derivative}, similar to that of the B-Jacobian. 

\begin{definition}\label{DefSCDProperty} Let $G:\R^n\tto\R^n$ be a set-valued mapping. Then:
  \begin{enumerate}
    \item The \emph{subspace containing (SC) derivative} $\Sp G:\gph G\tto \Z_n$ of $G$ is defined by
    \begin{align*}
        \Sp G(x,y)&:=\Kl{L\in \Z_n\mv \exists \, (x_k,y_k)\longsetto{{\OO_G}}(x,y):\ \lim_{k\to\infty} d_\Z(L,T_{\gph G}(x_k,y_k))=0} \,.
    \end{align*}
    \item The mapping $G$ is said to have the \emph{subspace containing derivative (\SCD) property at} $(x,y)\in\gph G$, if there holds $\Sp G(x,y)\neq \emptyset$. Furthermore, we say that $G$ has the \SCD property \emph{around} $(x,y)\in\gph G$, if there is a neighborhood $W$ of $(x,y)$ such that $G$ has the \SCD property at every $(x',y')\in\gph G\cap W$. Finally, we call $G$ an \emph{\SCD mapping}, if $G$ has the \SCD property at every point of its graph.
  \end{enumerate}
\end{definition}
Since the metric space $\Z_n$ is compact, $G$ has the SCD property at $(x,y)\in \gph G$ if and only if $(x,y)\in\cl\OO_G$, and $G$ is an SCD mapping if and only if $\cl\gph G=\cl\OO_G$. Furthermore, note that for a single-valued mapping $H:D\to\R^n$ there holds
    \begin{equation*}
        \Sp H (x,H(x))\supset \{\rge(I,A)\mv A\in \onabla H(x)\} \,, \quad  \forall \, x\in D \,,
    \end{equation*}
and equality holds for all $x$ such that $H$ is Lipschitz continuous near $x$.

\begin{example}
Consider the function $H:\R\to\R$ defined by $H(x)=3x^{\frac{1}{3}}$. Then $\gph H$ is a smooth curve in $\R^2$ and thus $H$ is an SCD mapping with SC derivative
    \begin{equation*}
        \Sp H(x,H(x))=\begin{cases}\rge(1,x^{-\frac{2}{3}})\,, &\mbox{if $x\not=0$} \,,\\
        \rge(0,1)\,, &\mbox{if $x=0$} \,.
        \end{cases} 
    \end{equation*}
However, note that $\onabla H(0)=\emptyset$. This example demonstrates that there can be subspaces in the SC derivative which are not the graph of a linear mapping from $\R^n\to\R^n$.
\end{example}

In addition to the SCD semismooth property, we also need to introduce the SCD semismooth* property. For this, note that in Theorem~\ref{ThConvSS} we only assumed \eqref{EqSS} but not the directional differentiability of $H$. Furthermore, note that for locally Lipschitz continuous mappings $H$ it can be shown that condition \eqref{EqSS} is equivalent to
    \begin{equation*}
        \sup_{A\in \onabla H(x)}\Kl{\dist{\kl{x-\xb, H(x)-H(\xb)},\rge(I,A)}}=\oo\kl{\norm{(x-\xb,H(x)-H(\xb)}} \,.
    \end{equation*}
This motivates the following definition of the SCD semismooth* property.

\begin{definition}
The set-valued mapping $G:\R^n\tto\R^n$ is said to be \emph{\SCD \ssstar at} $(\xb,\yb)\in\gph G$, if $G$ has the \SCD property around $(\xb,\yb)$ and for all $\epsilon>0$ there is some $\delta>0$ such that for all $(x,y)\in\gph G\cap\B_\delta(\xb,\yb)$ and all $L\in \Sp F(x,y)$ there holds
    \begin{equation*}
        \dist{(x-\xb,y-\yb),L}\leq\epsilon\norm{(x-\xb,y-\yb)} \,.
    \end{equation*}
\end{definition}

\begin{remark} Note that the SCD \ssstar property has been defined in \cite{GfrOut22a} in a different but equivalent way by means of adjoint subspaces. The class of SCD semismooth* mappings is rather broad. For example, every SCD mapping $G$ whose graph is a closed subanalytic set is  SCD semismooth* at every point of its graph. Informally speaking, if the graph of $G$ can be described by the finite union of sets defined by smooth equalities and inequalities, then it is very likely that the mapping is SCD semismooth* at every point of its graph.
\end{remark}

After these preliminaries, we are now ready to derive the \emph{SCD semismooth* Newton method} for solving the inclusion \eqref{EqInclG}. For this, let $\xb$ be a solution of \eqref{EqInclG} and consider an iterate $\ee xk$ close to $\xb$. First, note that there is a structural difference between solving an inclusion $0\in G(x)$ and solving (nonsmooth) equations $H(x)=0$. In the latter case, it follows from the local Lipschitz continuity of $H$ that $H(\ee xk)$ is close to $0$. However, when solving inclusions $0\in G(x)$, we cannot guarantee that $0$ is close to $G(\ee xk)$. In fact, since $G$ is a set-valued mappings, it may even happen that $G(\ee xk)=\emptyset$!

Hence, before computing a new iterate $\ee{\hat x}{k+1}$, we first have to perform a so-called \emph{approximation step (AS)}: We have to compute a point $(\ee{\hat x}k,\ee{\hat y}k)\in\gph G$ satisfying
    \begin{equation}\label{EqApprStep}
        \norm{(\ee{\hat x}k-\xb,\ee{\hat y}k-0)}\leq \eta \norm{\ee xk-\xb}\,,
    \end{equation}
for some constant $\eta>0$. Such a point always exist for any $\eta>0$ by just taking $(\ee{\hat x}k,\ee{\hat y}k)=(\xb,0)$. However, this choice is not practicable. Hence, in practice we compute $(\ee{\hat x}k,\ee{\hat y}k)\in\gph G$ as an approximate projection of $(\ee xk,0)$ onto $\gph G$ satisfying
    \begin{equation*}
        \norm{(\ee{\hat x}k,\ee{\hat y}k)-(\ee xk,0)}\leq \beta\dist{(\ee xk,0),\gph G} \,,
    \end{equation*}
for some constant $\beta\geq1$. This procedure guarantees that \eqref{EqApprStep} is satisfied with $\eta=\beta+1$. Different ways to compute this approximation step in practice are discussed below.

After the approximation step, we then perform a \emph{Newton step} as follows: Observe first that for equations $H(x)=0$, the Newton iteration \eqref{EqSSNewton} can be written as 
    \begin{equation*}
        (\ee x{k+1}-\ee xk,0-H(\ee xk))\in\rge(I,A_k) \,.
    \end{equation*}
This motivates that for inclusions $0 \in G(x)$, we choose a subspace $\ee Lk\in\Sp G((\ee{\hat x}k,\ee{\hat y}k)$ and then compute the next iterate $\ee x{k+1}$ of the SCD semismooth* Newton method via the requirement
    \begin{equation}\label{EqSS*Newton}
        (\ee x{k+1}-\ee{\hat x}k, -\ee{\hat y}k)\in\ee Lk \,.
    \end{equation}
Now, the question arises whether the inclusion \eqref{EqSS*Newton} has a unique solution. This is the case under a regularity condition, for the statement of which we need the following:

\begin{definition}
Let $\Z_n^{\rm reg}$ denote the collection of all subspaces $L\in\Z_n$ such that
    \begin{equation}\label{EqSCDReg_L}
        (y^*,0)\in L\ \Longrightarrow\ y^*=0 \,.
    \end{equation}
\end{definition}

Next, note that one can show that for a given subspace $L\in\Z_n^{\rm reg}$ there exists a unique $n\times n$ matrix $C_L$ such that $L=\rge(C_L,I)$. Thus, given any two $n\times n$ matrices $(X,Y)$ such that $L=\rge(X,Y)\in\Z_n^{\rm reg}$, it follows that $Y$ is nonsingular and $C_L=XY^{-1}$. Hence, if $\ee Lk=\rge(\ee Xk,\ee Yk)\in\Z_n^{\rm reg}$, then the inclusion \eqref{EqSCDReg_L} has the unique solution
    \begin{equation*}
        \ee x{k+1}=\ee{\hat x}k -C_{\ee Lk}\ee{\hat y}k \,,
    \end{equation*}
or equivalently, using $C_L=XY^{-1}$,
    \begin{equation}\label{EqSS*NewtonXY}
        \ee x{k+1}=\ee{\hat x}k +\ee Xk\ee pk \,, \qquad \text{where} \qquad \mbox{$\ee pk$ \, solves \,\,} \ee Yk \ee pk =-\ee{\hat y}k \,.
    \end{equation}

Summarizing these steps, we now arrive at the following algorithm.

\begin{algorithm}[\emph{SCD semismooth* Newton method} for solving the inclusion \eqref{EqInclG}]\label{algo_SCD_sss_inclusions}
Given $G: \R^n \tto \R^n$, $\ee x0 \in \R^n$, and $\eta > 0$. Repeat until convergence:
    \begin{enumerate}
        \item{\textbf{Approximation step (AS):}} Compute $(\ee {\hat x}k,\ee{\hat y}k)\in\gph G$ satisfying \eqref{EqApprStep}
        \item{\textbf{Newton step (NS):}} Select two $n\times n$ matrices  $\ee Xk, \ee Yk$ with 
            \begin{equation*}
                \rge(\ee {X}k,\ee Yk)\in\Sp G((\ee {\hat x}k,\ee{\hat y}k))\cap \Z_n^{\rm reg} \,,
            \end{equation*} 
        and use these matrices to compute the next iterate $\ee x{k+1}$ via \eqref{EqSS*NewtonXY}.
    \end{enumerate}
\end{algorithm}

Note that for an implementation of the SCD semismooth* Newton method given in Algorithm~\ref{algo_SCD_sss_inclusions}, only \textbf{one} subspace $\ee Lk=\rge(\ee Xk,\ee Yk)\in \Sp G((\ee {\hat x}k,\ee{\hat y}k))\cap \Z_n^{\rm reg}$ is required per iteration. Concerning convergence of the method, we have the following:

\begin{theorem}\label{thm_conv_inclusion}
Let $\xb\in\R^n$ be a solution of the inclusion \eqref{EqInclG}. Furthermore, let $G:\R^n\tto\R^n$ be SCD semismooth* at $(\xb,0)$ and assume that the regularity condition
    \begin{equation}\label{EqSCDRegSol}
        \Sp G(\xb,0)\subseteq \Z_n^{\rm reg}
    \end{equation}
holds. Then, for every $\eta>0$ in \eqref{EqApprStep}, there is a neighborhood $U$ of $\xb$ such that for every starting point $\ee x0\in U$ the SCD semismooth* Newton method given by Algorithm~\ref{algo_SCD_sss_inclusions} is well defined, and the iterates $\ee xk$ converge superlinearly to $\xb$.
\end{theorem}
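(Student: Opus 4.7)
The plan is to prove local superlinear convergence by combining three ingredients: first, propagating the regularity condition \eqref{EqSCDRegSol} from $(\xb,0)$ to a neighborhood while obtaining a uniform bound on the Newton matrices $C_L$; second, expressing the Newton error in terms of the distance from $(\ee{\hat x}k - \xb,\ee{\hat y}k)$ to the selected subspace $\ee Lk$; and third, bounding that distance via the SCD semismooth* property together with the approximation step \eqref{EqApprStep}.

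First, I would argue that $\Z_n^{\rm reg}$ is open in the compact metric space $\Z_n$ and that the map $L\mapsto C_L$ is continuous on $\Z_n^{\rm reg}$. Since $\Sp G$ has a closed graph in $\gph G\times\Z_n$ by its very definition as a set of graphical limits of tangent subspaces, it is outer semicontinuous as a set-valued mapping. Combined with the hypothesis $\Sp G(\xb,0)\subseteq\Z_n^{\rm reg}$, this yields a neighborhood $V$ of $(\xb,0)$ and a constant $M>0$ such that for every $(\hat x,\hat y)\in\gph G\cap V$ and every $L\in\Sp G(\hat x,\hat y)$ one has $L\in\Z_n^{\rm reg}$ with $\norm{C_L}\leq M$. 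Because $G$ has the \SCD property around $(\xb,0)$, $\Sp G(\hat x,\hat y)$ is nonempty on $\gph G\cap V$, so the Newton step \eqref{EqSS*NewtonXY} is always well defined there.

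For the single-step error analysis I use $\ee x{k+1}=\ee{\hat x}k - C_{\ee Lk}\ee{\hat y}k$. Letting $(C_{\ee Lk}p^*,p^*)\in\ee Lk$ attain $d_k:=\dist{(\ee{\hat x}k-\xb,\ee{\hat y}k),\ee Lk}$, the triangle inequality gives
\begin{equation*}
\norm{\ee x{k+1}-\xb}\leq \norm{\ee{\hat x}k-\xb-C_{\ee Lk}p^*}+\norm{C_{\ee Lk}}\cdot\norm{\ee{\hat y}k-p^*}\leq (1+M)\, d_k\,.
\end{equation*}
The SCD semismooth* property at $(\xb,0)$ then provides, for every $\epsilon>0$, some $\delta>0$ such that whenever $(\ee{\hat x}k,\ee{\hat y}k)\in\gph G\cap\B_\delta(\xb,0)$, the distance bound $d_k\leq\epsilon\norm{(\ee{\hat x}k-\xb,\ee{\hat y}k)}\leq \epsilon\eta\norm{\ee xk-\xb}$ holds, the second step being exactly \eqref{EqApprStep}. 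Combining, $\norm{\ee x{k+1}-\xb}\leq(1+M)\eta\epsilon\norm{\ee xk-\xb}$, which becomes a contraction once $\epsilon$ is chosen with $(1+M)\eta\epsilon\leq 1/2$ and $U\subset\B_{\delta/\eta}(\xb)$ is taken small enough so that $(\ee{\hat x}k,\ee{\hat y}k)$ stays in $V\cap\B_\delta(\xb,0)$ throughout; feasibility of the approximation step nearby follows from $(\xb,0)\in\gph G$ itself providing a valid (but impractical) choice. Superlinear convergence then follows by letting $\epsilon\downarrow 0$ as $\ee xk\to\xb$.

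The main technical obstacle I anticipate is the first step: establishing simultaneously that the regularity condition propagates to a neighborhood and that $\norm{C_L}$ is uniformly bounded across all admissible $L$. This is delicate because continuity of $L\mapsto C_L$ on $\Z_n^{\rm reg}$ breaks down at the boundary of $\Z_n^{\rm reg}$ within $\Z_n$; exploiting the compactness of $\Sp G(\xb,0)$ as a subset of the open set $\Z_n^{\rm reg}\subset\Z_n$, together with outer semicontinuity of $\Sp G$ along $\gph G$, is precisely what makes the uniform bound $M$ available and lets the contraction argument in the last paragraph close.
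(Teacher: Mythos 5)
Your argument is sound, and each of its three ingredients checks out: the one-step estimate $\norm{\ee x{k+1}-\xb}\leq(1+M)d_k$ is exactly right once $\ee Lk=\rge(C_{\ee Lk},I)$ with $\norm{C_{\ee Lk}}\leq M$, the semismooth$^*$ definition combined with \eqref{EqApprStep} gives $d_k\leq\epsilon\eta\norm{\ee xk-\xb}$, and the contraction/induction plus letting $\epsilon\downarrow 0$ yields well-definedness and superlinear convergence. The only place where you are slightly glib is the claim that the graph of $\Sp G$ is closed ``by its very definition'': since elements of $\Sp G(x_j,y_j)$ are themselves limits of tangent spaces at points of $\OO_G$, closedness requires a short diagonal-selection argument (pick for each $j$ a point of $\OO_G$ within $1/j$ of $(x_j,y_j)$ whose tangent space is within $1/j$ of $L_j$); together with openness of $\Z_n^{\rm reg}$ in $\Z_n$ and continuity of $L\mapsto C_L$ on $\Z_n^{\rm reg}$ (both provable by the compactness arguments you indicate), this does give the uniform bound $M$ on a neighborhood, so there is no genuine gap. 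Regarding the comparison: the paper itself offers no proof of Theorem~\ref{thm_conv_inclusion} but defers to \cite{GfrOut22a}, explicitly noting that the argument there is carried out in the dual formulation via the adjoint SC derivative, where the role of your uniform bound $\norm{C_L}\leq M$ is played by the SCD-regularity modulus and the semismooth$^*$ estimate is phrased through adjoint subspaces. Your proof is a direct primal-space version of that scheme: it buys a self-contained and more elementary derivation (regularity propagation, distance-to-subspace estimate, contraction), whereas the adjoint route of the reference is tailored to how regularity and the semismooth$^*$ property are actually verified and quantified there.
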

\begin{proof}
The proof of this theorem can be found in \cite{GfrOut22a}. However, note that in this work the dual formulation via the adjoint subspace containing (SC) derivative is used.
\end{proof}

Note that the approximation step in the SCD semismooth* Newton method is indispensable. For general mappings $G$, we do not know how to perform this approximation step. However, if, e.g., $G$ is a maximally monotone mapping, one can choose
    \begin{equation*}
        \ee{\hat x}k=(I+\lambda G)^{-1}(\ee xk)\,, \qquad \text{and} \qquad 
        \ee {\hat y}k=(\ee xk-\ee {\hat x}k)/\lambda \,,
    \end{equation*}
for any $\lambda>0$. Since the resolvent $(I+\lambda G)^{-1}$ is non-expansive, cf.~\cite[Theorem 12.12]{RoWe98}, and since $(I+\lambda G)^{-1}(\xb)=\xb$, we obtain that
    \begin{equation*}
        \norm{\ee {\hat  x}k-\xb}\leq \norm{\ee xk-\xb}\,, 
        \qquad \text{and} \qquad 
        \norm{\ee{\hat y}k}\leq\norm{\ee xk-\xb}/\lambda \,.
    \end{equation*}
Hence, condition \eqref{EqApprStep} holds with $\eta=\sqrt{1+\lambda^{-2}}$. Similarly, if $G=G_1+G_2$, where $G_1$ is a single-valued Lipschitz mapping and $G_2$ is maximally monotone, then $(\ee {\hat x}k, \ee {\hat y}k)$ satisfying \eqref{EqApprStep} can be computed via forward-backward splitting as
    \begin{equation*}
        \ee {\hat x}k=(I+\lambda G_2)^{-1}(I-\lambda G_1)(\ee xk)\,,
        \qquad\ee{\hat y}k=(\ee xk-\ee {\hat x}k)/\lambda- (G_1(\ee xk)-G_1(\ee {\hat x}k)) \,.
    \end{equation*}

\subsection{The SCD semismooth* Newton method in optimization}\label{subsect_semismooth_optimization}

After considering the case of general inclusions $0 \in G(x)$ in the previous section, we now want to apply the SCD semismooth* Newton method to the particular inclusion
    \begin{equation}\label{EqInclsubphi}
        0\in\partial\varphi(x) \,,
    \end{equation}
where $\varphi:\R^n\to\oR$ is proper. For this, we first need the following result.

\begin{proposition}[{\cite[Proposition~3.26]{GfrOut22a}}]\label{PropProxRegularQ}
Let $\varphi:\R^n\to\oR$ be prox-regular and subdifferentially continuous at $\xb$ for $\xb^*\in\partial \varphi(\xb)$. Then there holds:
    \begin{enumerate}
        \item The subdifferential mapping $\partial \varphi$ has the \SCD property around $(\xb,\xba)$ \,.
        \item There exists a $\lambda>0$ such that for every $(x,x^*)\in\gph\partial \varphi$ sufficiently close to $(\xb,\xba)$ and for every $L\in\Sp\partial \varphi(x,x^*)$ there is a symmetric positive semidefinite $n\times n$ matrix $B$ such that 
            \begin{equation*}
                L=\rge\kl{B,\frac 1\lambda(I- B)} \,. 
            \end{equation*}
    \end{enumerate}
\end{proposition}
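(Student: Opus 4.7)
The plan is to parametrize a neighborhood of $(\xb, \xba)$ in $\gph\partial\varphi$ by the proximal mapping of $\varphi$, thereby transferring the SCD analysis to a calculus question about a single-valued Lipschitz map that is locally the gradient of a convex function. Concretely, let $\rho \geq 0$ be a prox-regularity modulus for $\varphi$ at $\xb$ for $\xba$ and fix any $\lambda$ with $0 < \lambda < 1/\rho$ (any $\lambda > 0$ if $\rho = 0$). By the Poliquin--Rockafellar theory of prox-regular, subdifferentially continuous functions, on some neighborhood $U$ of $\zb := \xb + \lambda \xba$ the proximal operator $P_\lambda := \prox_{\lambda\varphi}$ is single-valued and Lipschitz, the Moreau envelope $e_\lambda\varphi$ is of class $C^{1,1}$ with $\nabla e_\lambda\varphi = \tfrac{1}{\lambda}(I - P_\lambda)$, and the map
$$\Phi(z) := \bigl(P_\lambda(z),\, \tfrac{1}{\lambda}(z - P_\lambda(z))\bigr)$$
is a bi-Lipschitz homeomorphism from $U$ onto a relatively open neighborhood $V$ of $(\xb, \xba)$ in $\gph\partial\varphi$. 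Furthermore $\theta(z) := \tfrac{1}{2}\|z\|^2 - \lambda\, e_\lambda\varphi(z)$ is convex and $C^{1,1}$ on $U$ (its semiconvexity follows from $\lambda < 1/\rho$) with $\nabla\theta = P_\lambda$, so $P_\lambda$ is locally the gradient of a convex function.

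For assertion~1, I would apply Rademacher's theorem to the Lipschitz map $P_\lambda$ to obtain a dense full-measure subset $\Omega \subset U$ on which $P_\lambda$ is Fr\'echet differentiable. At every $z \in \Omega$ the matrix $B(z) := \nabla P_\lambda(z)$ is symmetric positive semidefinite, being the derivative of the monotone gradient of the convex function $\theta$. Furthermore the block $\bigl(B(z),\tfrac{1}{\lambda}(I - B(z))\bigr)$ has full column rank, since $B(z) h = 0$ together with $(I - B(z))h = 0$ forces $h = 0$, so $\Phi(z)$ is a smooth point of the $n$-dimensional Lipschitz surface $V$ with tangent subspace
$$T_{\gph\partial\varphi}(\Phi(z)) \,=\, \rge\bigl(B(z),\, \tfrac{1}{\lambda}(I - B(z))\bigr),$$
and consequently $\Phi(z) \in \OO_{\partial\varphi}$. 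Since $\Omega$ is dense in $U$ and $\Phi$ is a homeomorphism, $\Phi(\Omega) \subset \OO_{\partial\varphi}$ is dense in $V$; combined with the compactness of $\Z_n$ this yields $\Sp\partial\varphi(u, v) \neq \emptyset$ for every $(u, v) \in V$, establishing assertion~1.

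For assertion~2, fix $(u, v) \in V$ and $L \in \Sp\partial\varphi(u, v)$. By definition $L$ is the $d_{\Z_n}$-limit of $L_k := T_{\gph\partial\varphi}(u_k, v_k)$ along some sequence $(u_k, v_k) \in \OO_{\partial\varphi}$ with $(u_k, v_k) \to (u, v)$. Exploiting the bi-Lipschitz character of $\Phi$, one shows that for $z_k := \Phi^{-1}(u_k, v_k)$ the point $\Phi(z_k)$ lies in $\OO_{\partial\varphi}$ exactly when $P_\lambda$ is Fr\'echet differentiable at $z_k$, and in that case $L_k = \rge(B_k,\tfrac{1}{\lambda}(I - B_k))$ for $B_k := \nabla P_\lambda(z_k)$ symmetric positive semidefinite with $\|B_k\| \leq \operatorname{Lip}(P_\lambda)$. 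Compactness of this set of matrices yields a subsequence $B_k \to B$ with $B$ symmetric positive semidefinite, and a short computation with the orthogonal projectors onto $\rge(B_k,\tfrac{1}{\lambda}(I - B_k))$ shows
$$\rge\bigl(B_k,\, \tfrac{1}{\lambda}(I - B_k)\bigr) \longrightarrow \rge\bigl(B,\, \tfrac{1}{\lambda}(I - B)\bigr) \quad \text{in } d_{\Z_n};$$
uniqueness of the limit then forces $L = \rge(B,\tfrac{1}{\lambda}(I - B))$, as required.

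I expect the main obstacle to be the converse direction of the equivalence used in the third paragraph, namely that $\Phi(z) \in \OO_{\partial\varphi}$ forces Fr\'echet differentiability of $P_\lambda$ at $z$. The forward direction is a routine chain-rule computation on the image of $\Phi$, but the converse requires pulling an $n$-dimensional tangent subspace back through the bi-Lipschitz parametrization $\Phi$ and identifying the unique matrix $\nabla P_\lambda(z)$ that it encodes. If this equivalence turns out to be awkward to establish cleanly, it can be bypassed by a diagonal argument: replace each $(u_k, v_k)$ by $\Phi(\tilde z_k)$ with $\tilde z_k \in \Omega$ chosen so close to $z_k$ that $d_{\Z_n}(T_{\gph\partial\varphi}(\Phi(\tilde z_k)), L_k) < 1/k$, using the density of $\Omega$ and the continuity of the tangent-subspace map restricted to $\Phi(\Omega)$.
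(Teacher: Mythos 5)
The paper itself does not prove this proposition (it is quoted verbatim from \cite[Proposition~3.26]{GfrOut22a}), so the comparison is with the strategy behind that cited result; your proximal-point parametrization $\Phi(z)=\bigl(P_\lambda(z),\tfrac1\lambda(z-P_\lambda(z))\bigr)$ is exactly that route, and most of your argument is sound: the Poliquin--Rockafellar localization (single-valued Lipschitz $P_\lambda$, $C^{1,1}$ envelope, local coincidence of $\gph\partial\varphi$ with $\Phi(U)$ thanks to subdifferential continuity), the local monotonicity of $P_\lambda$ for $\lambda<1/\rho$ making it the gradient of a convex $\theta$, and the proof of assertion~1 by pushing the Rademacher set $\Omega$ through the bi-Lipschitz homeomorphism are all correct.

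The genuine gap is precisely the point you flagged, and your fallback does not close it. Since $\Sp\partial\varphi(u,v)$ is built from tangent subspaces at \emph{arbitrary} points of $\OO_{\partial\varphi}$, you must identify $T_{\gph\partial\varphi}$ at every graphically smooth point near $(\xb,\xba)$, not only at images of differentiability points of $P_\lambda$; the proposed diagonal argument presupposes that a tangent subspace at a graphically smooth point can be approximated by tangent subspaces at nearby Rademacher points (``continuity of the tangent-subspace map restricted to $\Phi(\Omega)$''), and this is not available: derivatives of a Lipschitz map need not vary continuously on its differentiability set, and there is no a priori reason the tangent plane at a graphically smooth point is a limit of Rademacher tangent planes. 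The converse direction can, however, be proved directly and cheaply. Let $L'=T_{\gph\partial\varphi}(\Phi(z))$ be an $n$-dimensional subspace. Since $\pi(a,b):=a+\lambda b$ satisfies $\pi\circ\Phi={\rm id}$ and $\Phi$ is Lipschitz, every $w\in\R^n$ is $\pi$ of some element of $L'$, so $\pi\vert_{L'}$ is a bijection and $L'=\rge\kl{G,\tfrac1\lambda(I-G)}$ for a unique matrix $G$. Then for each direction $w$, any cluster point $\xi$ of $\bigl(P_\lambda(z+tw)-P_\lambda(z)\bigr)/t$ as $t\downarrow0$ produces the tangent vector $\bigl(\xi,\tfrac1\lambda(w-\xi)\bigr)\in L'$, which forces $\xi=Gw$; by Lipschitz boundedness of the difference quotients this gives directional derivatives $Gw$ in all directions, hence G\^ateaux and therefore (Lipschitz, finite dimension) Fr\'echet differentiability of $P_\lambda$ at $z$ with $\nabla P_\lambda(z)=G$. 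Symmetry of $G$ is then the classical symmetry of a total second derivative of $\theta$ at a point of differentiability of $\nabla\theta$ (the same fact you should invoke for $B(z)$, $z\in\Omega$, where monotonicity alone only yields positive semidefiniteness of the symmetric part), and positive semidefiniteness follows from monotonicity of $P_\lambda$. With this equivalence in place, your limiting argument for assertion~2 is correct, including the continuity of $B\mapsto\rge\kl{B,\tfrac1\lambda(I-B)}$ in $\Z_n$, which holds because $B^2+\lambda^{-2}(I-B)^2$ is uniformly positive definite over bounded symmetric positive semidefinite $B$.
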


Hence, the following additional assumption on $\varphi$ ensures that $\partial\varphi$ is an SCD mapping.

\begin{assumption}\label{AssProx}
The set $\operatorname{proxreg} \varphi$ is dense in $\gph\partial\varphi$, where
    \begin{equation*}
        \operatorname{proxreg} \varphi:=\left\{(x,x^*)\in\gph \partial\varphi \, \Big\vert \, \parbox{7cm}{ $\varphi$ is prox-regular and subdifferentially continuous at $x$ for $x^*$}\right\} \,.
    \end{equation*}
\end{assumption}

\begin{lemma}[{\cite[Lemma~3.1]{Gfr24}}] 
If Assumption~\ref{AssProx} holds, then $\partial \varphi$ is an SCD mapping.
\end{lemma}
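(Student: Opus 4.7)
The plan is to reduce the SCD-mapping property to the characterization recalled right after Definition~\ref{DefSCDProperty}: $\partial \varphi$ is an \SCD mapping if and only if every point of $\gph \partial \varphi$ lies in $\cl \OO_{\partial \varphi}$ (equivalently, the \SCD property holds at every such point). Since $\cl \OO_{\partial\varphi}$ is closed by definition, it suffices to exhibit, for each point of the graph, a sequence in $\cl \OO_{\partial\varphi}$ converging to it. This is exactly what the density hypothesis in Assumption~\ref{AssProx} is designed to provide, once combined with the local conclusion of Proposition~\ref{PropProxRegularQ}.

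Concretely, I would fix an arbitrary $(\xb,\xba)\in \gph \partial\varphi$ and, invoking Assumption~\ref{AssProx}, pick a sequence $(x_k,x_k^*)\in \operatorname{proxreg}\varphi$ with $(x_k,x_k^*)\to(\xb,\xba)$. At each such $(x_k,x_k^*)$, Proposition~\ref{PropProxRegularQ}(i) applies and asserts that $\partial\varphi$ has the \SCD property around $(x_k,x_k^*)$; in particular the \SCD property holds at the point $(x_k,x_k^*)$ itself, so by the equivalence recalled after Definition~\ref{DefSCDProperty} we have $(x_k,x_k^*)\in\cl \OO_{\partial\varphi}$.

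Since $\cl \OO_{\partial\varphi}$ is closed, passing to the limit $k\to\infty$ yields $(\xb,\xba)\in \cl \OO_{\partial\varphi}$. This gives the \SCD property of $\partial\varphi$ at $(\xb,\xba)$, and as $(\xb,\xba)\in\gph \partial\varphi$ was arbitrary, $\partial\varphi$ is an \SCD mapping as required.

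I do not anticipate any real obstacle here: the substantive content has been done in Proposition~\ref{PropProxRegularQ}, which guarantees the \SCD property on an \emph{a priori} possibly small subset of the graph, and the role of Assumption~\ref{AssProx} is precisely to propagate that property to all of $\gph \partial\varphi$ by density combined with the closedness of $\cl \OO_{\partial\varphi}$. The only point worth double-checking in the writeup is the equivalence "\SCD property at $(x,y)$ $\Leftrightarrow$ $(x,y)\in \cl \OO_G$," which follows from compactness of $\Z_n$ and is stated just after Definition~\ref{DefSCDProperty}.
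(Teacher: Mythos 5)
Your argument is correct and is essentially the intended one: you combine the density of $\operatorname{proxreg}\varphi$ from Assumption~\ref{AssProx} with Proposition~\ref{PropProxRegularQ}(i) to place a dense subset of $\gph\partial\varphi$ inside $\cl\OO_{\partial\varphi}$, and then use the closedness of $\cl\OO_{\partial\varphi}$ together with the compactness-based equivalence ``\SCD property at $(x,y)$ $\Leftrightarrow$ $(x,y)\in\cl\OO_{\partial\varphi}$'' stated after Definition~\ref{DefSCDProperty}. This matches the route of the cited proof, so nothing further is needed.
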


In the remainder of this section, we always assume that Assumption~\ref{AssProx} is satisfied without further mention. Note that Assumption~\ref{AssProx} is rather weak: Even though one can find proper lsc functions $\varphi$ violating it, these examples are purely academic and do not have any practical relevance.

Proposition \ref{PropProxRegularQ} provides a certain basis representation of the subspaces $L\in\Sp\partial \varphi(x,x^*)$ via the positive semidefinite matrices $B$. However, this basis representation is not very convenient to work with and therefore we consider the following alternative.

\begin{definition}\label{DefZnP}
Let $\ZnP$ denote the collection of all subspaces $L\in\Z_n$ such that there are symmetric $n\times n$ matrices $P$ and $W$ with the following properties:
    \begin{enumerate}
        \item[(i)] $L=\rge(P,W)$ \,,
        \item[(ii)] $P^2=P$, i.e., $P$ represents the orthogonal projection onto some subspace of $\R^n$,
        \item[(iii)] $W(I-P)=I-P$.
  \end{enumerate}
\end{definition}

Next, note that if $W,P$ are symmetric matrices fulfilling the conditions (ii) and (iii) in the above definition, then it follows that 
    \begin{gather}
        \label{EqPW1}
        (I-P)W=\big(W(I-P)\big)^T=(I-P)^T=I-P=W(I-P) \,,
        \\
        \label{EqPW2}
        PW=WP \,,
        \\
        \label{EqPW3}
        W=(P+I-P)W(P+I-P)=PWP+I-P \,,
    \end{gather}
These properties are used below as well as in the proof of the following proposition.
    
\begin{proposition}[{\cite[Proposition~3.6]{Gfr24}}]\label{PropPW_Basis}
For every $(x,x^*)\in\operatorname{proxreg}\varphi$ there holds 
    \begin{equation*}
        \emptyset\not=\Sp\partial\varphi(x,x^*)\subseteq \ZnP \,.
    \end{equation*}
Furthermore, for every $(x,x^*)\in\gph \varphi$ one has
    \begin{equation}\label{EqSpPW}
        \Sp_{P,W}\partial \varphi(x,x^*):=\Sp\partial\varphi(x,x^*)\cap \ZnP\neq\emptyset \,.
    \end{equation}
\end{proposition}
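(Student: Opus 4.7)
My plan is to handle the two claims separately. For the first I would invoke Proposition~\ref{PropProxRegularQ} to obtain the representation $L=\rge(B,\lambda^{-1}(I-B))$ and then rewrite it explicitly in the $\ZnP$ form. For the second I would approximate the arbitrary point $(x,x^*)$ by points in $\operatorname{proxreg}\varphi$, use the first claim at nearby graphically smooth points, and pass to the limit using compactness of $\Z_n$ together with a closedness argument for $\ZnP$.

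\textbf{First statement.} Let $(x,x^*)\in\operatorname{proxreg}\varphi$; non-emptiness of $\Sp\partial\varphi(x,x^*)$ is Proposition~\ref{PropProxRegularQ}(i). For the inclusion, take $L\in\Sp\partial\varphi(x,x^*)$, so that by Proposition~\ref{PropProxRegularQ}(ii) there exist $\lambda>0$ and a symmetric positive semidefinite $B$ with $L=\rge(B,\lambda^{-1}(I-B))$. I would exhibit explicit witnesses for $L\in\ZnP$ by setting $M:=\rge B=(\ker B)^\perp$, taking $P$ to be the orthogonal projection onto $M$, and defining $W$ block-diagonally by $W|_M:=\lambda^{-1}((B|_M)^{-1}-I_M)$ (legal since $B|_M\colon M\to M$ is a symmetric positive definite bijection) and $W|_{M^\perp}:=I_{M^\perp}$. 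Then $P,W$ are symmetric, $P^2=P$, and $W(I-P)=I-P$ are immediate. To verify $\rge(P,W)=L$, split $v=v_1+v_2\in M\oplus M^\perp$ and substitute $u_1:=Bv_1$ (equivalently $v_1=(B|_M)^{-1}u_1$); both $\rge(B,\lambda^{-1}(I-B))$ and $\rge(P,W)$ then collapse to the same set $\{(u_1,\lambda^{-1}((B|_M)^{-1}-I_M)u_1+u_2)\mv u_1\in M,\,u_2\in M^\perp\}$, so $L=\rge(P,W)\in\ZnP$.

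\textbf{Second statement.} For arbitrary $(x,x^*)\in\gph\partial\varphi$, Assumption~\ref{AssProx} supplies a sequence $(x_k,x_k^*)\in\operatorname{proxreg}\varphi$ with $(x_k,x_k^*)\to(x,x^*)$. By Proposition~\ref{PropProxRegularQ}(i), $\partial\varphi$ has the SCD property around each $(x_k,x_k^*)$, so the graphical-smoothness set $\OO_{\partial\varphi}$ is dense in a $\gph\partial\varphi$-neighborhood of $(x_k,x_k^*)$; since prox-regularity and subdifferential continuity are locally persistent on $\gph\partial\varphi$, a diagonal extraction produces $(\tilde x_k,\tilde x_k^*)\in\OO_{\partial\varphi}\cap\operatorname{proxreg}\varphi$ with $(\tilde x_k,\tilde x_k^*)\to(x,x^*)$. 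Then $L_k:=T_{\gph\partial\varphi}(\tilde x_k,\tilde x_k^*)\in\Sp\partial\varphi(\tilde x_k,\tilde x_k^*)$, and the first statement yields $L_k\in\ZnP$. Compactness of $\Z_n$ extracts a subsequence with $L_k\to L$, and $L\in\Sp\partial\varphi(x,x^*)$ follows directly from Definition~\ref{DefSCDProperty}.

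\textbf{Closing the limit argument.} The remaining, and in my view main, obstacle is $L\in\ZnP$, which I would obtain from closedness of $\ZnP$ in $\Z_n$. My approach is to show that $L\in\ZnP$ is equivalent to vanishing of the symplectic form $\omega((u_1,u_2),(v_1,v_2)):=\skalp{u_1,v_2}-\skalp{u_2,v_1}$ on $L\times L$. The forward direction uses that (ii) and (iii) of Definition~\ref{DefZnP} together with symmetry of $P,W$ force $PW=WP$, which is exactly $\omega(u,v)=0$ for $u,v\in\rge(P,W)$. For the converse, given a subspace $L$ on which $\omega$ vanishes, let $M$ be the image of $L$ under the projection onto the first factor; isotropy then forces $\{(0,y)\in L\}=\{0\}\times M^\perp$ and identifies $L$ with the graph of a symmetric map $A\colon M\to M$ modulo $M^\perp$, so that $P:=$ projection onto $M$ and $W$ = block-diagonal extension of $A$ by $I_{M^\perp}$ yield a $\ZnP$-representation. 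Since vanishing of $\omega$ on $L\times L$ is a closed bilinear condition under subspace convergence, $\ZnP$ is closed, which gives $L\in\ZnP$ and completes the proof.
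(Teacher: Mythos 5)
The paper itself contains no proof of this proposition---it is quoted verbatim from \cite[Proposition~3.6]{Gfr24}---so I can only assess your argument on its own terms. Your first part is correct: Proposition~\ref{PropProxRegularQ}(2), applied at $(x,x^*)$ itself, gives $L=\rge\kl{B,\lambda^{-1}(I-B)}$ with $B$ symmetric positive semidefinite, and your block construction ($P$ the orthogonal projection onto $\rge B$, $W=\lambda^{-1}\big((B|_{\rge B})^{-1}-I\big)$ on $\rge B$ and $W=I$ on $(\rge B)^\perp$) does produce symmetric $P,W$ satisfying (i)--(iii) of Definition~\ref{DefZnP} with $\rge(P,W)=L$; the key facts $\ker B=(\rge B)^\perp$ and invertibility of $B|_{\rge B}$ are used correctly. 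Your characterization of $\ZnP$ as the $n$-dimensional subspaces on which the canonical symplectic form vanishes, and the resulting closedness of $\ZnP$ in $\Z_n$, are also correct; this is exactly the ``self-adjoint subspace'' ($L=L^*$) viewpoint of the cited works (compare the remark $L=L^*$ in Example~\ref{ExEll_0}), so your limiting scheme is very much in the spirit of the source.

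The one step you assert without justification is the ``local persistence'' of prox-regularity and subdifferential continuity along $\gph\partial\varphi$, which you invoke so that the graphically smooth approximating points lie in $\operatorname{proxreg}\varphi$ and your first part applies to them. This persistence is a true Poliquin--Rockafellar-type fact, but it is nontrivial, is nowhere established in the paper, and---more importantly---is unnecessary. Proposition~\ref{PropProxRegularQ}(2) already yields the representation $L=\rge\kl{B,\lambda^{-1}(I-B)}$ for \emph{every} $L\in\Sp\partial\varphi(x',y')$ at \emph{all} graph points $(x',y')$ sufficiently close to a point of $\operatorname{proxreg}\varphi$, so the tangent spaces at nearby points of $\OO_{\partial\varphi}$ lie in $\ZnP$ by exactly your part-one computation, with no persistence claim needed. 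Even more directly, you can skip the detour through $\OO_{\partial\varphi}$ altogether: at the approximating points $(x_k,x_k^*)\in\operatorname{proxreg}\varphi$ choose any $L_k\in\Sp\partial\varphi(x_k,x_k^*)\subseteq\ZnP$ (nonempty by your first part), extract $L_k\to L$ by compactness of $\Z_n$, note that a diagonal argument in Definition~\ref{DefSCDProperty} (each $L_k$ is itself a limit of tangent spaces at points of $\OO_{\partial\varphi}$ near $(x_k,x_k^*)$) gives $L\in\Sp\partial\varphi(x,x^*)$, and conclude $L\in\ZnP$ from your closedness result, which is \eqref{EqSpPW}. With that replacement your proof is complete.
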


Note that if $\varphi$ is twice Fr\'echet differentiable at $\xb$, then $\partial\varphi(\xb)=\{\nabla \varphi(\xb)\}$ and
    \begin{equation*}
        \rge(I,\nabla^2\varphi(\xb))\in\Sp\partial\varphi(\xb,\nabla\xb) \,.
    \end{equation*} 
Furthermore, if $\varphi$ is twice continuously differentiable at $\xb$ then we even have 
    \begin{equation*}
        \Sp\partial\varphi(\xb,\nabla\xb)=\Kl{\rge(I,\nabla^2\varphi(\xb))} \,.
    \end{equation*}
For examples with nonsmooth functionals $\varphi$, please refer to Appendix~\ref{appendix_examples}, where we consider the case of $\ell_p$-norms with $0 \leq p \leq 1$ for later use in Section~\ref{subsect_appl_sparsity}.

In the SCD semismooth* Newton method given in Algorithm~\ref{algo_SCD_sss_inclusions}, the use of subspaces belonging to $\Sp_{P,W}\partial \varphi(x,x^*)$ has some advantages when computing the Newton direction. In particular, assume that we are given some subspace
    \begin{equation*}
        L=\rge(P,W)\in \Sp_{P,W}\partial \varphi(x,x^*) \overset{\eqref{EqSpPW}}{=} 
        \Sp\partial\varphi(x,x^*)\cap \ZnP \,,
    \end{equation*}
and that, for solving \eqref{EqSS*Newton}, we want to find $\bar s$ with $(\bar s,-x^*)\in L$. This is equivalent to
    \begin{equation*}
        \bar s=Pp \,, 
        \qquad \text{for some} \quad p \quad  \text{with} \,\, Wp=-x^* \,.
    \end{equation*}
Now, by using the property \eqref{EqPW2}, we obtain that
    \begin{equation*}
        -Px^*=PWp = Wp =W\bar s \,.
    \end{equation*}
Conversely, if $\bar s$ satisfies $W\bar s=-Px^*$, then due to \eqref{EqPW3} we obtain 
    \begin{equation*}
        (I-P)\bar s=-Px^*-PWP\bar s\in\rge P \,,
    \end{equation*}
which implies that $\bar s\in\rge P$. Now, setting $p=\bar s-(I-P)x^*$ we obtain $\bar s=Pp$ and $Wp=W\bar s+(I-P)p=-x^*$. Together with \cite[Lemma 3.13]{Gfr24}, this yields:

\begin{lemma}\label{LemStatPoint}
  Let $(x,x^*)\in\gph \partial \varphi$, $L=\rge(P,W)\in \Sp_{P,W}\partial\varphi(x,x^*)$ and $\bar s\in\R^n$ be given. Then the following three statements are equivalent:
  \begin{enumerate}
    \item[(i)] $(\bar s,-x^*)\in L$, 
    \qquad  
    (ii) $W\bar s=-Px^*$,
    \item[(iii)] $\bar s$ is a stationary point for the quadratic program
        \begin{equation}\label{EqQP}
            \min_s \frac 12s^TWs+ \skalp{x^*,s} \,, \quad\mbox{ subject to }\quad s\in\rge P \,,
        \end{equation}
    i.e., $0\in \partial q_{P,W}(\bar s)$, where
        \begin{equation*}
            q_{P,W}(s):=\frac 12s^TWs+\skalp{z^*,s}+\delta_{\rge P}(s)
            =\begin{cases}
                \frac 12s^TWs+\skalp{z^*,s}\,, &\mbox{if $s\in\rge P$} \,,
                \\
                \infty \,, &\mbox{else} \,, \end{cases} 
        \end{equation*}
    or, equivalently, there exists some multiplier $\bar s^*$ such that $W\bar s+x^*+(I-P)\bar s^*=0$.
  \end{enumerate}
\end{lemma}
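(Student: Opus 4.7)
The plan is to prove the three-way equivalence by the chain (i)$\Leftrightarrow$(ii)$\Leftrightarrow$(iii), using the defining properties of $(P,W)$ in Definition~\ref{DefZnP} together with their consequences \eqref{EqPW1}--\eqref{EqPW3}. The implication (i)$\Leftrightarrow$(ii) is essentially contained in the paragraph immediately preceding the lemma, so I would only formalize that calculation; the real work lies in tying in the stationarity formulation (iii) and its multiplier form.

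For (i)$\Leftrightarrow$(ii), since $L=\rge(P,W)$, the inclusion $(\bar s,-x^*)\in L$ is equivalent to the existence of $p\in\R^n$ with $\bar s=Pp$ and $Wp=-x^*$. Multiplying the second identity by $P$ on the left and invoking $PW=WP$ from \eqref{EqPW2} yields $W\bar s=WPp=PWp=-Px^*$, which is (ii). Conversely, starting from $W\bar s=-Px^*$, I would use the decomposition $W=PWP+I-P$ from \eqref{EqPW3}: applying $I-P$ to both sides and exploiting $(I-P)P=0$ together with $(I-P)^2=I-P$ forces $(I-P)\bar s=0$, hence $\bar s\in\rge P$. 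The explicit choice $p:=\bar s-(I-P)x^*$ then satisfies $Pp=\bar s$ and, using \eqref{EqPW1}, $Wp=W\bar s-(I-P)x^*=-Px^*-(I-P)x^*=-x^*$.

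For (ii)$\Leftrightarrow$(iii), I would write $q_{P,W}$ as the sum of the smooth quadratic $s\mapsto\frac{1}{2} s^TWs+\skalp{x^*,s}$ and the convex indicator $\delta_{\rge P}$. The subdifferential sum rule, combined with the identification of the normal cone to the subspace $\rge P$ as $(\rge P)^\perp=\ker P=\rge(I-P)$ (using that $P$ is a symmetric orthogonal projection), shows that $0\in\partial q_{P,W}(\bar s)$ is equivalent to $\bar s\in\rge P$ together with the existence of $\bar s^*$ satisfying $W\bar s+x^*+(I-P)\bar s^*=0$, which is precisely the multiplier formulation. Applying $P$ from the left and using $P(I-P)=0$ distills this to $\bar s\in\rge P$ and $PW\bar s=-Px^*$. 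To close the loop with (ii), note that when $\bar s\in\rge P$, equation \eqref{EqPW3} gives $W\bar s=PWP\bar s+(I-P)\bar s=PWP\bar s$, so $PW\bar s=W\bar s$ and the stationarity condition collapses to (ii); conversely, (ii) already forces $\bar s\in\rge P$ by the argument above, and applying $P$ recovers $PW\bar s=-Px^*$ trivially.

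The main subtlety, rather than a genuine obstacle, is bookkeeping the interplay between $P$, $W$, and the indicator: the prescription $W(I-P)=I-P$ looks unmotivated at first, but it is exactly what makes $W$ act as the identity on $\ker P$. This is what permits both the free addition of elements of $\ker P$ to the ``Newton direction'' $p$ in the equivalence (i)$\Leftrightarrow$(ii) and the decomposition $W=PWP+(I-P)$ that forces $W\bar s\in\rge P$ whenever $\bar s\in\rge P$, and therefore lets one pass between the graph description in (i) and the KKT system in (iii) without any further assumptions.
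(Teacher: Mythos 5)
Your argument is correct. For (i)$\Leftrightarrow$(ii) it is exactly the computation the paper carries out in the paragraph preceding the lemma (using \eqref{EqPW1}--\eqref{EqPW3} and the choice $p=\bar s-(I-P)x^*$), so there is nothing to compare there. The difference lies in the link to (iii): the paper does not prove it but imports it from \cite[Lemma~3.13]{Gfr24}, whereas you supply a short self-contained derivation — write $q_{P,W}$ as a smooth quadratic plus $\delta_{\rge P}$, use the exact sum rule and the identification $N_{\rge P}(\bar s)=(\rge P)^\perp=\ker P=\rge(I-P)$ (valid since $P$ is a symmetric projection) to get the multiplier form, and then observe that solvability of $(I-P)\bar s^*=-(W\bar s+x^*)$ is precisely the condition $P(W\bar s+x^*)=0$, which together with $\bar s\in\rge P$ and $W\bar s=PWP\bar s$ collapses to (ii). (For the direction (ii)$\Rightarrow$(iii) one can even take $\bar s^*=-x^*$ explicitly.) Your route makes the lemma independent of the external reference at the cost of a few lines; the paper's citation keeps the exposition shorter but leaves the (iii)-equivalence, including the normal-cone bookkeeping, to \cite{Gfr24}. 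Both are valid, and your normal-cone argument is the natural way to fill in the cited step.
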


Using this result, we obtain the following variant of Algorithm~\ref{algo_SCD_sss_inclusions} for solving  \eqref{EqInclsubphi}:

\begin{algorithm}[\emph{SCD semismooth* Newton method} for solving the inclusion \eqref{EqInclsubphi}]\label{algo_SCD_sss_subphi}
Given $\varphi: \R^n \to (-\infty,\infty]$, $\ee x0 \in \R^n$, and $\eta > 0$. Repeat until convergence:
    \begin{enumerate}
        \item{\textbf{Approximation step (AS):}} Compute $(\ee {\hat x}k,\ee{\hat y}k)\in\gph \partial \varphi$ satisfying \eqref{EqApprStep}.
        \item{\textbf{Newton step (NS):}} Select two $n\times n$ matrices  $\ee Pk, \ee Wk$ with 
            \begin{equation*}
                \rge(\ee Pk,\ee Wk)\in \Sp_{P,W}\partial \varphi(\ee {\hat x}k,\ee{\hat y}k) \overset{\eqref{EqSpPW}}{=} 
                \Sp\partial\varphi(\ee {\hat x}k,\ee{\hat y}k)\cap \ZnP \,,
                \end{equation*}
        compute a stationary point $\ee{\bar s}{k}$ of the quadratic program
        \begin{equation}\label{helper_SCD_quadratic}
            \min_s \frac 12s^T \ee Wk s+ \skalp{\ee{\hat y}k,s} \,, \quad\mbox{ subject to }\quad s\in\rge \ee Pk \,,
        \end{equation}
        and define the next iterate $\ee x{k+1} := \ee{\hat x}k + \ee{\bar s}{k}$.
    \end{enumerate}
\end{algorithm}

For ways to solve the approximation step in Algorithm~\ref{algo_SCD_sss_subphi}, see, e.g., Section~\ref{subsect_globalization} or the last paragraph of Section~\ref{subsect_semismooth_inclusions}. Furthermore, combining Theorem~\ref{thm_convergence_subphi} with the above observations, in particular Lemma~\ref{LemStatPoint}, we obtain the following convergence results.

\begin{theorem}\label{thm_convergence_subphi}
Let $\xb\in\R^n$ be a solution of the inclusion \eqref{EqInclsubphi}, let $\varphi: \R^n \to (-\infty,\infty]$ be proper, and let Assumption~\ref{AssProx} hold. Furthermore, let $\partial \varphi$ be SCD semismooth* at $(\xb,0)$, and assume that the regularity condition $\Sp \partial \varphi(\xb,0) \subset \Z_n^{\rm reg}$ holds. Then, for every $\eta>0$ in \eqref{EqApprStep}, there is a neighborhood $U$ of $\xb$ such that for every starting point $\ee x0\in U$ the SCD semismooth* Newton method given by Algorithm~\ref{algo_SCD_sss_subphi} is well defined, and the iterates $\ee xk$ converge superlinearly to $\xb$.
\end{theorem}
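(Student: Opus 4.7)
The plan is to reduce the statement directly to Theorem~\ref{thm_conv_inclusion} applied to the set-valued mapping $G := \partial \varphi$, and then to use Lemma~\ref{LemStatPoint} to identify the Newton step of Algorithm~\ref{algo_SCD_sss_subphi} with a legitimate instantiation of the Newton step of Algorithm~\ref{algo_SCD_sss_inclusions}. Once the two algorithms are seen to produce the same iterates, superlinear convergence is inherited.

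First, I would verify the hypotheses of Theorem~\ref{thm_conv_inclusion} for $G = \partial \varphi$. Under Assumption~\ref{AssProx}, the mapping $\partial \varphi$ is an SCD mapping; by hypothesis it is SCD semismooth$^*$ at $(\xb, 0)$ and satisfies the regularity condition $\Sp \partial \varphi(\xb, 0) \subset \Z_n^{\rm reg}$. Theorem~\ref{thm_conv_inclusion} then delivers, for every $\eta>0$, a neighborhood $U$ of $\xb$ on which Algorithm~\ref{algo_SCD_sss_inclusions} is well-defined and generates iterates converging superlinearly to $\xb$. Next, I would compare the two algorithms iterate by iterate. Both share the same approximation step, so the pair $(\ee{\hat x}k,\ee{\hat y}k)\in\gph\partial\varphi$ is identical. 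In the Newton step, Algorithm~\ref{algo_SCD_sss_subphi} selects $\ee Lk=\rge(\ee Pk,\ee Wk)\in\Sp_{P,W}\partial\varphi(\ee{\hat x}k,\ee{\hat y}k)$, which is nonempty by Proposition~\ref{PropPW_Basis}, and computes a stationary point $\ee{\bar s}{k}$ of \eqref{helper_SCD_quadratic}. By Lemma~\ref{LemStatPoint}, $\ee{\bar s}{k}$ being a stationary point of \eqref{helper_SCD_quadratic} is equivalent to $(\ee{\bar s}{k},-\ee{\hat y}k)\in \ee Lk$, which is exactly the defining relation \eqref{EqSS*Newton} of the Newton step in Algorithm~\ref{algo_SCD_sss_inclusions} for the same subspace. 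Hence $\ee x{k+1}=\ee{\hat x}k+\ee{\bar s}{k}$ coincides with the iterate that Algorithm~\ref{algo_SCD_sss_inclusions} would produce.

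The main obstacle is to confirm that the subspaces $\ee Lk$ selectable by Algorithm~\ref{algo_SCD_sss_subphi} are admissible choices for Algorithm~\ref{algo_SCD_sss_inclusions}, i.e.\ that $\ee Lk\in\Z_n^{\rm reg}$ near $\xb$, so that the quadratic program \eqref{helper_SCD_quadratic} actually admits a stationary point and the identification above is valid. For this, I would localize the assumption $\Sp \partial \varphi(\xb,0)\subset\Z_n^{\rm reg}$ to a neighborhood of $(\xb,0)$ using a standard limiting argument: $\Sp \partial\varphi$ is a graphical outer limit of tangent subspaces taken within the compact metric space $\Z_n$, so if there were a sequence $(x_j,x_j^*)\to(\xb,0)$ in $\gph\partial\varphi$ with $\Sp \partial\varphi(x_j,x_j^*)\not\subset\Z_n^{\rm reg}$, a compactness-extraction argument would produce some $L\in \Sp\partial\varphi(\xb,0)\setminus\Z_n^{\rm reg}$, contradicting the hypothesis. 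Since $\Sp_{P,W}\partial\varphi\subseteq\Sp\partial\varphi$, any selection made by Algorithm~\ref{algo_SCD_sss_subphi} for iterates sufficiently close to $\xb$ therefore lies in $\Z_n^{\rm reg}$. Lemma~\ref{LemStatPoint} then ensures the existence of $\ee{\bar s}{k}$ and the equality of the two Newton iterates, and the conclusion of Theorem~\ref{thm_conv_inclusion} transfers verbatim to Algorithm~\ref{algo_SCD_sss_subphi}.
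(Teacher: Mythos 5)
Your proposal is correct and follows essentially the same route as the paper, which obtains Theorem~\ref{thm_convergence_subphi} by combining the general inclusion result (Theorem~\ref{thm_conv_inclusion} applied to $G=\partial\varphi$) with the equivalence of Lemma~\ref{LemStatPoint} identifying the quadratic-program Newton step with the inclusion \eqref{EqSS*Newton}. Your additional localization of the regularity condition $\Sp\partial\varphi(\xb,0)\subset\Z_n^{\rm reg}$ to nearby graph points via compactness of $\Z_n$ is a valid way to justify well-definedness and matches the argument used in the cited source.
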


\section{SCD semismooth* Newton and Tikhonov}\label{sect_semismooth_Tikhonov}

In Section~\ref{sect_semismooth} we considered the SCD semismooth* Newton method for solving general inclusions $0 \in G(x)$, as well as particular inclusions of the form  $0\in\partial\varphi(x)$. Now, recall that our initial goal was to minimize Tikhonov functionals of the form \eqref{Tikhonov_finite}, i.e.,
    \begin{equation*}
        \Tad(x) = S\kl{F(x),\yd} + \alpha R(x) \,.
    \end{equation*}
This problem can be rewritten as the compund minimization problem 
    \begin{equation}\label{EqOptProbl}
        \min \varphi(x):=f(x)+g(x) \,,
    \end{equation}
where the functionals $f$ and $g$ are defined by
    \begin{equation*}
        f(x) := S\kl{F(x),\yd} \,,
        \qquad \text{and} \qquad
        g(x) := \alpha R(x) \,.
    \end{equation*}
To find a stationary point of the functional $\varphi$, we have to solve the inclusion $0 \in \partial \varphi$, cf.~\eqref{first_order_optimality_nonsmooth_finite}. Hence, we now adapt the SCD semismooth* Newton method to composite optimization problems of the form \eqref{EqOptProbl}. Throughout this section, we make the following

\begin{assumption}\label{assumption_fg}
The function $\varphi : \R^n \to (-\infty,\infty]$ has the form $\varphi = f + g$, where $f:\R^n\to\R$ is twice continuously differentiable and $g:\R^n\to\oR$ is a proper lsc prox-bounded function.     
\end{assumption}

Furthermore, for the practical implementation of the resulting SCD semismooth* Newton method it is required that for every $\lambda>0$ sufficiently small and every $x\in\R^n$, one element of the proximal mapping $\prox_{\lambda g}(x)$ can be  calculated.

\subsection{SCD semismooth* Newton for Tikhonov functionals}

First, we start by considering the following sum rule for the composite functional $\varphi$.

\begin{lemma}[{\cite[Lemma 3.16]{Gfr24}}]\label{LemSumRul}Assume that $\varphi=f+g$, where $f$ is twice continuously differentiable at $\xb$ and $g:\R^n\to\oR$ is a proper lsc function. Given $\xb\in\partial g(\xb)$, for every subspace $\rge(P,W)\in\Sp_{P,W}\partial g(\xb,\xba)$ there holds
    \begin{equation*}
        rge(P,P\nabla^2 f(\xb)P +W)\in \Sp_{P,W}\partial \varphi(\xb,\nabla f(\xb)+\xba) \,.
    \end{equation*}
\end{lemma}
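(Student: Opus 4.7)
The plan is to realize $\gph \partial\varphi$ as the image of $\gph \partial g$ under the $C^1$-diffeomorphism
$$\Phi : \R^n \times \R^n \to \R^n \times \R^n, \qquad \Phi(x,y) := (x,\, y + \nabla f(x)),$$
whose inverse is $\Phi^{-1}(x,z) = (x, z - \nabla f(x))$ and whose derivative is
$$D\Phi(x,y) = \begin{pmatrix} I & 0 \\ \nabla^2 f(x) & I \end{pmatrix},$$
and then to push forward the given subspace from $\Sp_{P,W}\partial g(\bar x,\bar x^*)$ through $D\Phi(\bar x,\bar x^*)$. Since $f$ is $C^1$, the classical sum rule yields $\partial \varphi(x) = \nabla f(x) + \partial g(x)$ for every $x$, so $\gph \partial \varphi = \Phi(\gph \partial g)$. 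As $\Phi$ is a global $C^1$-diffeomorphism, tangent cones transform via $T_{\gph\partial\varphi}(\Phi(x,y)) = D\Phi(x,y)\, T_{\gph\partial g}(x,y)$, so $(x,y) \in \OO_{\partial g}$ iff $\Phi(x,y) \in \OO_{\partial\varphi}$.

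Next, I would pick a sequence $(x_k,y_k) \longsetto{\OO_{\partial g}} (\bar x,\bar x^*)$ along which $T_{\gph\partial g}(x_k,y_k) \to L = \rge(P,W)$ in $\Z_n$. Then $\Phi(x_k,y_k) \longsetto{\OO_{\partial\varphi}} (\bar x,\, \nabla f(\bar x)+\bar x^*)$, and continuity of $\nabla^2 f$ gives $D\Phi(x_k,y_k) \to D\Phi(\bar x,\bar x^*)$ in operator norm. Combining this with the continuity of the pushforward of $n$-dimensional subspaces under a convergent sequence of invertible matrices, I conclude that
$$T_{\gph\partial\varphi}(\Phi(x_k,y_k)) \;\longrightarrow\; D\Phi(\bar x,\bar x^*)\cdot L \;=\; \rge\big(P,\; \nabla^2 f(\bar x)P + W\big)$$
in $\Z_n$, so this subspace lies in $\Sp\partial\varphi(\bar x,\, \nabla f(\bar x)+\bar x^*)$.

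The main obstacle is the final algebraic step: identifying this subspace with $\rge(P,\, P\nabla^2 f(\bar x)P + W)$ and checking membership in $\Z_n^{P,W}$. Setting $H := \nabla^2 f(\bar x)$, I would decompose an arbitrary $u \in \R^n$ as $u = s + t$ with $s = Pu \in \rge P$ and $t = (I-P)u \in \ker P$, and use $W = PWP + I - P$ (from \eqref{EqPW3}) to compute $Wu = PWPs + t$. An element of $\rge(P, HP + W)$ then reads $(s,\, PHPs + PWPs + t + (I-P)HPs)$ — since $Hs = PHPs + (I-P)HPs$ after using $Ps = s$ — while an element of $\rge(P, PHP + W)$ reads $(s,\, PHPs + PWPs + t)$. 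The two differ only by the term $(I-P)HPs \in \ker P$ in the second coordinate, which is absorbed by the free parameter $t \in \ker P$; hence the two subspaces coincide.

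Finally, I would verify that $PHP + W$ is symmetric (as $H$ and $W$ are) and that $(PHP + W)(I-P) = 0 + (I-P) = I - P$ (because $PHP(I-P) = 0$), so $\rge(P, PHP + W) \in \Z_n^{P,W}$ by Definition~\ref{DefZnP}. This yields
$$\rge\big(P,\; P\nabla^2 f(\bar x) P + W\big) \in \Sp_{P,W}\partial\varphi\big(\bar x,\; \nabla f(\bar x) + \bar x^*\big),$$
completing the argument.
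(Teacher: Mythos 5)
Your proposal is correct. The paper itself gives no proof of this lemma (it is quoted from \cite[Lemma~3.16]{Gfr24}), and your argument is precisely the natural one behind that citation: transport $\gph\partial g$ onto $\gph\partial\varphi$ by the shear $\Phi(x,y)=(x,y+\nabla f(x))$ (valid by the exact sum rule for $C^1$ perturbations), push tangent spaces forward through $D\Phi$, pass to the limit in $\Z_n$, and then use $W(I-P)=I-P$ together with $P^2=P$ to absorb the $\ker P$--component $(I-P)\nabla^2 f(\xb)P s$ and identify $\rge(P,\nabla^2 f(\xb)P+W)$ with $\rge(P,P\nabla^2 f(\xb)P+W)\in\ZnP$. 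The only point worth flagging is minor: differentiating $\Phi$ at the nearby points $(x_k,y_k)$ uses existence (and continuity at $\xb$) of $\nabla^2 f$ in a neighborhood, which is exactly what the paper's standing Assumption~\ref{assumption_fg} supplies; also note the lemma's hypothesis should read $\xba\in\partial g(\xb)$, a typo you implicitly corrected.
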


This result is the basis of the following adapted SCD semismooth* Newton method:

\begin{algorithm}[\emph{SCD semismooth* Newton method} for solving the inclusion \eqref{EqInclsubphi}]\label{algo_SCD_sss_fpg}
Let Assumption~\ref{assumption_fg} hold, and let $\ee x0 \in \R^n$ and $\eta > 0$. Repeat until convergence:
    \begin{enumerate}
        \item{\textbf{Approximation step (AS):}} Compute $(\ee {\hat x}k,\ee{\hat y}k)\in\gph \partial \varphi$ satisfying \eqref{EqApprStep}. 
        \item{\textbf{Newton step (NS):}} Select two $n\times n$ matrices  $\ee Pk, \ee Wk$ with 
            \begin{equation*}
                \rge(\ee Pk,\ee Wk)\in \Sp_{P,W}\partial g(\ee {\hat x}k,\ee{\hat y}k) \overset{\eqref{EqSpPW}}{=} 
                \Sp\partial g(\ee {\hat x}k,\ee{\hat y}k)\cap \ZnP \,,
                \end{equation*}
        compute a stationary point $\ee{\bar s}{k}$ of the quadratic program
        \begin{equation*}
            \min_s \frac 12s^T (\ee Pk \nabla^2 f(\xb)\ee Pk + \ee Wk) s+ \skalp{\ee{\hat y}k,s} \,, \quad\mbox{ subject to }\quad s\in\rge \ee Pk \,,
        \end{equation*}
        and define the next iterate $\ee x{k+1} := \ee{\hat x}k + \ee{\bar s}{k}$.
    \end{enumerate}
\end{algorithm}

\begin{remark}
Concerning the approximation step in Algorithm~\ref{algo_SCD_sss_fpg}, note that if the functional $g$ is proper, lsc, and convex, then this is achieved by (for any $\lambda >0$) the choice
    \begin{equation*}
       \ee {\hat x}k=(I+\lambda \partial g)^{-1}(I-\lambda \nabla f)(\ee xk)\,,
       \qquad\ee{\hat y}k=(\ee xk-\ee {\hat x}k)/\lambda- (\nabla f(\ee xk)-\nabla f(\ee {\hat x}k)) \,.
    \end{equation*}
\end{remark}

For this modified algorithm, we have the following result in analogy to Theorem~\ref{thm_convergence_subphi}.

\begin{theorem}\label{thm_convergence_fpg}
Let $\xb\in\R^n$ be a solution of the inclusion \eqref{EqInclsubphi} and let Assumption~\ref{AssProx} and Assumption~\ref{assumption_fg} hold. Furthermore, let $\partial g$ be SCD semismooth* at $(\xb,-\nabla f(\xb))$, and assume that the regularity condition $\Sp \partial \varphi(\xb,0) \subset \Z_n^{\rm reg}$ holds. Then, for every $\eta>0$ in \eqref{EqApprStep}, there is a neighborhood $U$ of $\xb$ such that for every starting point $\ee x0\in U$ the SCD semismooth* Newton method given by Algorithm~\ref{algo_SCD_sss_fpg} is well defined, and the iterates $\ee xk$ converge superlinearly to $\xb$.
\end{theorem}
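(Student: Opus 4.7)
The plan is to derive Theorem~\ref{thm_convergence_fpg} as a direct corollary of Theorem~\ref{thm_convergence_subphi} applied to $\varphi = f + g$. First I would verify that all hypotheses required by Theorem~\ref{thm_convergence_subphi} at $\varphi$ are implied by those assumed here on $f$ and $g$; then I would show that the Newton step in Algorithm~\ref{algo_SCD_sss_fpg} is literally an instance of the Newton step in Algorithm~\ref{algo_SCD_sss_subphi} for $\varphi$ with a suitably chosen subspace.

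For the transfer of hypotheses, the key observation is that, since $f$ is $C^2$, the classical sum rule gives $\partial \varphi(x) = \nabla f(x) + \partial g(x)$, so $\gph \partial \varphi = \Phi(\gph \partial g)$ for the $C^1$-diffeomorphism $\Phi:(x,z)\mapsto(x,z+\nabla f(x))$ on $\R^n\times\R^n$. Because $\Phi$ and $\Phi^{-1}$ preserve tangent cones and $n$-dimensional tangent subspaces, one has $\OO_{\partial\varphi} = \Phi(\OO_{\partial g})$ and obtains the explicit correspondence
    \begin{equation*}
        \rge(P,W)\in\Sp\partial g(x,z)\ \Longleftrightarrow\ \rge(P,W+P\nabla^2 f(x)P)\in\Sp\partial\varphi(x,z+\nabla f(x))\,,
    \end{equation*}
which extends Lemma~\ref{LemSumRul} to a bijective change of basis on the SC derivatives. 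From this, Assumption~\ref{AssProx} for $g$ transfers to $\varphi$, and the SCD semismooth* property of $\partial g$ at $(\xb,-\nabla f(\xb))$ transfers to $\partial\varphi$ at $(\xb,0)$, since $\Phi$ is bi-Lipschitz near $\xb$ and therefore distorts both the norm on $\R^n\times\R^n$ and the Grassmannian metric $d_{\Z_n}$ only by bounded factors depending on $\nabla^2 f(\xb)$. The regularity condition $\Sp\partial\varphi(\xb,0)\subset\Z_n^{\rm reg}$ is taken as hypothesis.

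For the identification of Newton steps, let $(\ee{\hat x}k,\ee{\hat y}k)\in\gph\partial\varphi$ denote the approximation-step output and set $z_k:=\ee{\hat y}k-\nabla f(\ee{\hat x}k)\in\partial g(\ee{\hat x}k)$. Proposition~\ref{PropPW_Basis} yields $\Sp_{P,W}\partial g(\ee{\hat x}k,z_k)\neq\emptyset$; any $\rge(\ee Pk,\ee Wk)$ drawn from it gives, via Lemma~\ref{LemSumRul}, a subspace $\rge(\ee Pk,\ee Pk\nabla^2 f(\ee{\hat x}k)\ee Pk+\ee Wk)\in\Sp_{P,W}\partial\varphi(\ee{\hat x}k,\ee{\hat y}k)$. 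Lemma~\ref{LemStatPoint} then identifies stationary points of the QP in Algorithm~\ref{algo_SCD_sss_fpg} with the Newton direction prescribed by Algorithm~\ref{algo_SCD_sss_subphi} for $\varphi$ at that subspace, so Algorithm~\ref{algo_SCD_sss_fpg} is indeed a legitimate realisation of Algorithm~\ref{algo_SCD_sss_subphi} and Theorem~\ref{thm_convergence_subphi} applies. The main technical obstacle is the rigorous transfer of the SCD semismooth* property through $\Phi$: one must verify that the orthogonal projections onto $\rge(P,W)$ and $\rge(P,W+P\nabla^2 f(\xb)P)$ differ by $\OO(\|\nabla^2 f(\xb)\|)$ in operator norm, so that both the $d_{\Z_n}$-limit description of $\Sp\partial\varphi$ and the $\varepsilon$-$\delta$ estimate in the SCD semismooth* definition survive the change of variables up to a harmless constant factor.
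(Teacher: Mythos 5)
Your proposal is correct and takes essentially the same route as the paper: Theorem~\ref{thm_convergence_fpg} is obtained by recognizing Algorithm~\ref{algo_SCD_sss_fpg} as an instance of Algorithm~\ref{algo_SCD_sss_subphi} for $\varphi=f+g$, using the sum rule of Lemma~\ref{LemSumRul} together with Lemma~\ref{LemStatPoint} to identify the Newton steps, and then invoking Theorem~\ref{thm_convergence_subphi}. The transfer of the \SCD \ssstar property from $\partial g$ at $(\xb,-\nabla f(\xb))$ to $\partial\varphi$ at $(\xb,0)$, which you carry out explicitly via the diffeomorphism $(x,z)\mapsto(x,z+\nabla f(x))$, is precisely the ingredient the paper delegates to the cited results in \cite{Gfr24}, and your bi-Lipschitz/pushforward estimate (noting that on $\ZnP$-representations the pushforward $\rge(P,\nabla^2 f(x)P+W)$ coincides with $\rge(P,P\nabla^2 f(x)P+W)$ thanks to $W(I-P)=I-P$) fills that in correctly.
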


\subsection{Globalizing the SCD semismooth* Newton method}\label{subsect_globalization}

In this section, we discuss a hybrid of the SCD semimooth* Newton method and the well-known \emph{proximal gradient method (PGM)} for solving \eqref{EqOptProbl}. In the resulting method, PGM ensures global convergence of the algorithm, while the SCD semimooth* Newton method yields fast convergence close to a solution. Furthermore, the PGM acts as an approximation step for the SCD semismooth* Newton method.

First, for a given $\lambda>0$, we define the function $\psi_\lambda^{\rm FB}:\R^n\times\R^n\to \oR$ by
    \begin{equation*}
    \begin{split}
        \psi_\lambda^{\rm FB}(x,z)&:=\ell_f(x,z)+\frac1{2\lambda}\norm{z-x}^2+g(z) \,,
        \\
        \ell_f(x,z)&:=f(x)+\skalp{\nabla f(x),z-x} \,.
    \end{split}
    \end{equation*}
Furthermore, we define the mapping $T_\lambda:\R^n\tto\R^n$ and the function $\varphi_\lambda^{\rm FB}: \R^n\to\R$ by
    \begin{align*}
        T_\lambda(x):=\argmin_z\psi_\lambda^{\rm FB}(x,z) \,,
        \qquad \text{and} \qquad 
        \varphi_\lambda^{\rm FB}(x):=\min_z \psi_\lambda^{\rm FB}(x,z) \,.
    \end{align*}
Clearly, for any $z\in T_\lambda(x)$, we have the first-order optimality condition
    \begin{equation}\label{EqFO}
        0\in\widehat\partial_z \psi_\lambda^{\rm FB}(x,z)=\nabla f(x)+\frac 1\lambda(z-x)+\widehat\partial g(z) \,.
    \end{equation}
Hence, if $x\in T_\lambda(x)$ then $0\in\nabla f(x)+\widehat\partial g(x)=\widehat\partial \varphi(x)$, i.e., $x$ is a stationary point for $\varphi$. The function $\varphi_\lambda^{\rm FB}$ is called the \emph{forward-backward envelope}, and has been introduced in \cite{PaBe13} under the name \emph{composite Moreau envelope}. It has been successfully used in a series of papers for the solution of nonsmooth optimization problems, cf.~\cite{StThPa17, ThStPa18, StThPa19, MaTh22}.

We now state some basic properties of $T_\lambda$ and $\varphi_\lambda^{\rm FB}$. First of all, note that
    \begin{equation*}
        T_\lambda(x)={\rm prox}_{\lambda g}(x-\lambda\nabla f(x)) \,,
    \end{equation*}
and denote by $\lambda_g$ the threshold of prox-boundedness of $g$ as defined in Section~\ref{sect_background_variational}. 

\begin{lemma}[{\cite[Lemma 2.2]{MaTh22}}]\label{LemBasFBE1}Let $\lambda\in(0,\lambda_g)$. Then the following hold:
\begin{enumerate}
    \item[(i)] $\varphi_\lambda^{\rm FB}$ is everywhere finite and locally Lipschitz continuous.
    \item[(ii)] $\varphi_\lambda^{\rm FB}(x)\leq \varphi(x)$ for all $x\in\R^n$, and equality holds if and only if $x\in T_\lambda(x)$.
    \item[(iii)] If $\tilde x\in T_\lambda(x)$ and $f(\tilde x)\leq \ell(x,\tilde x)+\frac L2\norm{\tilde x-x}^2$ then
        \begin{align*}
            \varphi(\tilde x)\leq \varphi_\lambda^{\rm FB}(x)-\frac{1-\lambda L}{2\lambda}\norm{\tilde x-x}^2 \,.
        \end{align*}
\end{enumerate}
\end{lemma}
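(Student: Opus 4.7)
My plan is to rewrite $\psi_\lambda^{\rm FB}$ in a form that separates the $z$-dependent part into a standard Moreau envelope. Completing the square in $z$ gives
\begin{equation*}
\psi_\lambda^{\rm FB}(x,z) = f(x) - \tfrac{\lambda}{2}\norm{\nabla f(x)}^2 + \tfrac{1}{2\lambda}\norm{z-(x-\lambda\nabla f(x))}^2 + g(z),
\end{equation*}
so that, minimizing over $z$,
\begin{equation*}
\varphi_\lambda^{\rm FB}(x) = f(x) - \tfrac{\lambda}{2}\norm{\nabla f(x)}^2 + e_{\lambda g}(x-\lambda\nabla f(x)),
\end{equation*}
where $e_{\lambda g}$ denotes the Moreau envelope of $g$ at parameter $\lambda$, and correspondingly $T_\lambda(x)=\prox_{\lambda g}(x-\lambda\nabla f(x))$ as noted. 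This identification is the structural tool I would use for part (i).

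For part (i), the point is that for $\lambda\in(0,\lambda_g)$, standard variational-analysis results on prox-bounded functions (see, e.g., Rockafellar--Wets, Thm.~1.25) guarantee that $e_{\lambda g}$ is everywhere finite and locally Lipschitz; combined with the smoothness of $f$ and $\nabla f$, the displayed formula above yields local Lipschitz continuity and finiteness of $\varphi_\lambda^{\rm FB}$. This is the step where the prox-boundedness hypothesis really earns its keep, and I expect it to be the main obstacle in the sense that one must appeal carefully to the nonconvex theory of Moreau envelopes rather than just standard convex facts.

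Part (ii) is essentially definitional: $\varphi_\lambda^{\rm FB}(x)\leq \psi_\lambda^{\rm FB}(x,x)=f(x)+g(x)=\varphi(x)$ because $z=x$ is a feasible choice in the minimization. Equality forces $x$ to attain the minimum of $\psi_\lambda^{\rm FB}(x,\cdot)$, which is exactly the statement $x\in T_\lambda(x)$; the converse direction is immediate by plugging $z=x$ again.

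Part (iii) is a short direct computation. If $\tilde x\in T_\lambda(x)$ then
\begin{equation*}
\varphi_\lambda^{\rm FB}(x) = \psi_\lambda^{\rm FB}(x,\tilde x) = \ell_f(x,\tilde x) + \tfrac{1}{2\lambda}\norm{\tilde x-x}^2 + g(\tilde x).
\end{equation*}
Using the hypothesized descent-type inequality $f(\tilde x)\leq \ell_f(x,\tilde x)+\tfrac{L}{2}\norm{\tilde x-x}^2$ and adding $g(\tilde x)$ on both sides,
\begin{equation*}
\varphi(\tilde x) \leq \ell_f(x,\tilde x) + g(\tilde x) + \tfrac{L}{2}\norm{\tilde x-x}^2 = \varphi_\lambda^{\rm FB}(x) - \tfrac{1}{2\lambda}\norm{\tilde x-x}^2 + \tfrac{L}{2}\norm{\tilde x-x}^2,
\end{equation*}
which rearranges exactly to the claimed bound $\varphi(\tilde x)\leq \varphi_\lambda^{\rm FB}(x)-\frac{1-\lambda L}{2\lambda}\norm{\tilde x-x}^2$. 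Thus (ii) and (iii) are elementary once the envelope reformulation is in place, and the only nontrivial work is invoking the right nonconvex Moreau-envelope regularity theorem for (i).
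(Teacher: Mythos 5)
Your proof is correct, and since the paper gives no proof of this lemma at all (it simply cites \cite[Lemma 2.2]{MaTh22}), your argument --- completing the square to write $\varphi_\lambda^{\rm FB}(x)=f(x)-\tfrac{\lambda}{2}\norm{\nabla f(x)}^2+e_{\lambda g}(x-\lambda\nabla f(x))$, invoking the nonconvex Moreau-envelope regularity for (i), and the two elementary computations for (ii) and (iii) --- is exactly the standard route behind the cited result. One minor citation point: in \cite{RoWe98}, Theorem~1.25 gives finiteness and continuity of the Moreau envelope for $\lambda\in(0,\lambda_g)$, while its local Lipschitz continuity is recorded in Example~10.32; this does not affect the validity of your argument.
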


We now sketch the \emph{Basic Globalized SemiSmooth* Newton (BasGSSN) method}, a globally convergent method for solving \eqref{EqOptProbl} which was first derived in \cite[Algorithm~1]{Gfr24}. 

\begin{algorithm}[\emph{Basic Globalized SemiSmooth* Newton (BasGSSN) method} for solving \eqref{EqOptProbl}]\label{algo_BasGSSN}
Let Assumption~\ref{assumption_fg} hold, and let $\ee x0 \in \R^n$ and $\eta > 0$. Furthermore, fix some parameters $\alpha,\beta\in(0,1)$, $\sigma\in(0,\frac 12)$, $\bar \rho>0$, and $\bar\lambda\in(0,\lambda_g)$.
Repeat until convergence:
    \begin{enumerate}
        \item Select a search direction $\ee sk\in\R^n$ with $\norm{\ee sk}\leq\bar\rho$. (For convergence, this can be any direction, but for efficiency later the SCD semismooth* Newton direction is used.)
        \item Determine a step size $\ee \tau k\in\{2^{-i}\mv i\in\N_0\}$, and $\ee\lambda{k+1}\in \{2^i\ee \lambda k \mv i\in\mathbb{Z}\}$, such that the new iterates $\ee x{k+1}=\ee zk+\ee\tau k\ee sk$ and $\ee z{k+1}\in T_{\ee\lambda {k+1}}(\ee x{k+1})$ fulfill
            \begin{align}\label{EqBasGSSN1a}
                &\phiFB{k+1}\leq\phiFB{k}-\beta(1-\alpha)\ee \eta k \,,
                \\
                \label{EqBasGSSN1b}
                &f(\ee z{k+1})\leq \ell_f(\ee x{k+1},\ee z{k+1})+\alpha\ee \eta{k+1} \,,
                \\
                & \qquad \qquad \qquad \qquad \text{with} \quad \ee \eta{k+1}:=\frac 1{2\ee\lambda{k+1}}\norm{\ee z{k+1}-\ee x{k+1}}^2 \,,
            \end{align}
        and in addition one of the following three conditions is satisfied:
        \begin{align}\label{EqBasGSSN2a}
            &f(\ee z{k+1})\geq \ell_f(\ee x{k+1},\ee z{k+1})+\sigma\alpha\ee \eta{k+1} \,,
            \\
            \label{EqBasGSSN2b}&\ee \lambda {k+1}>\frac{\bar\lambda}2 \,,
            \\
            \label{EqBasGSSN2c}
            &f(\tilde z) \geq \ell_f(\ee x{k+1}, \tilde z)+\alpha\frac 1{4\ee\lambda{k+1}}\norm{\tilde z-\ee x{k+1}}^2\,, \text{ where }\tilde z\in T_{2\ee \lambda {k+1}}(\ee x{k+1}) \,.
    \end{align}
    \end{enumerate}
\end{algorithm}

\begin{remark}
Step~2 of Algorithm~\ref{algo_BasGSSN} can be performed as follows: We start with $\ee \tau k=1$ and $\ee \lambda{k+1}=\ee\lambda k$. If \eqref{EqBasGSSN1a} is violated, then we divide $\ee \tau k$ by $2$ and recompute $\ee x{k+1}$ and $\ee z{k+1}$. Otherwise, if \eqref{EqBasGSSN1a} is fulfilled and \eqref{EqBasGSSN1b} is violated, then we divide $\ee\lambda{k+1}$ by $2$ and recompute $\ee z{k+1}$. We repeat this procedure until both \eqref{EqBasGSSN1a} and \eqref{EqBasGSSN1b} are satisfied. One can show that this occurs after finitely many repetitions. Now it may happen that the value of $\ee\lambda{k+1}$ is too small, i.e., all three conditions \eqref{EqBasGSSN2a}-\eqref{EqBasGSSN2c} are violated. If this is the case, we multiply $\ee\lambda{k+1}$ by $2$ and recompute $\ee z{k+1}$ until one of the conditions \eqref{EqBasGSSN2a}-\eqref{EqBasGSSN2c} is satisfied. Due to \eqref{EqBasGSSN2c}, condition \eqref{EqBasGSSN1b} remains valid, and, since for any $x$ and any $0<\lambda_1<\lambda_2<\lambda_g$ we have $\varphi_{\lambda_2}^{\rm FB}(x)\leq\varphi_{\lambda_1}^{\rm FB}(x)$, the inequality \eqref{EqBasGSSN1a} also remains valid. For further details, we refer to \cite{Gfr24}.    
\end{remark}

The motivation behind the BasGSSN method defined by Algorithm~\ref{algo_BasGSSN} is as follows: Due to Lemma~\ref{LemBasFBE1}, condition \eqref{EqBasGSSN1b} ensures that there holds
    \begin{equation*}
        \varphi_\lambda^{\rm FB}(\ee zk)\leq \varphi(\ee zk)\leq \phiFB{k}-(1-\alpha)\ee \eta k\, \qquad \forall \lambda\in(0,\lambda_g) \,.
    \end{equation*}
In the pure PGM method, one simply takes $\ee x{k+1}=\ee z{k+1}$ (which corresponds to the choice $\ee sk=0)$, and adjusts $\ee \lambda{k+1}$ such that \eqref{EqBasGSSN1b} holds. This results in the decrease
    \begin{equation*}
        \phiFB{k+1}\leq \phiFB{k}-(1-\alpha)\ee \eta k \,,
    \end{equation*}
in the forward-backward envelope $\phiFB{k}$. In contrast, in BasGSSN we potentially sacrifice a part of this decrease in order to perform a line search along the direction $\ee sk$. However, due to \eqref{EqBasGSSN1a} we still have a descent in the forward-backward envelope.

\begin{theorem}[{\cite[Theorem 4.4]{Gfr24}}]\label{ThConvBasGSSN}
Consider the iterates of the BasGSSN method described in Algorithm~\ref{algo_BasGSSN} above. Then either $\varphi(\ee zk)\to-\infty$ or the following holds:
\begin{enumerate}
    \item[(i)] The sequence $\phiFB k$ is strictly  decreasing and converges to some finite value $\bar\varphi \geq \inf \varphi$.
    \item[(ii)] Every accumulation point of the sequence $\ee xk$ is also an accumulation point of the sequence $\ee zk$ and vice versa.
    \item[(iii)] Every accumulation point $\xb$ of the sequence $\ee xk$ satisfies $\xb\in T_{\underline \lambda}(\xb)$ for some $\underline\lambda\in (0,\lb)$. In particular, $\xb$ is a stationary point for $\varphi$ fulfilling $0\in\hat \partial\varphi(\xb)$.
\end{enumerate}
\end{theorem}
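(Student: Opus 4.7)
The argument rests on three pillars: monotonicity of the forward--backward envelope along the iterates, two-sided control of the step-size parameters $\ee\lambda k$, and a limiting-subdifferential passage to the limit in the first-order condition \eqref{EqFO}. For (i), observe that by \eqref{EqBasGSSN1a} the sequence $\phiFB k$ is non-increasing, and strictly decreasing whenever $\ee\eta k>0$ (if $\ee\eta k=0$ at some stage then $\ee xk=\ee zk\in T_{\ee\lambda k}(\ee xk)$ is already stationary and the claim is trivial). Hence either $\phiFB k\to-\infty$ or it converges to some finite $\bar\varphi$. Combining \eqref{EqBasGSSN1b} with Lemma~\ref{LemBasFBE1}(iii) applied at the choice $L=\alpha/\ee\lambda{k+1}$ (so that $\lambda L=\alpha$) yields
    \[
        \varphi(\ee z{k+1})\le \phiFB{k+1}-(1-\alpha)\ee\eta{k+1}\le \phiFB{k+1}\,.
    \]
In the divergent case this forces $\varphi(\ee zk)\to-\infty$; in the convergent case we obtain $\bar\varphi\ge \inf\varphi$, which completes (i). Telescoping \eqref{EqBasGSSN1a} in the convergent regime further yields $\sum_k \ee\eta k<\infty$, so $\ee\eta k\to 0$.

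For (ii), I would deduce $\|\ee zk-\ee xk\|\to 0$ along bounded subsequences. The step-size rule in step~2 keeps $\ee\lambda k$ bounded above: condition \eqref{EqBasGSSN2b} stops the doubling loop once $\ee\lambda{k+1}>\bar\lambda/2$, and \eqref{EqBasGSSN1b} triggers halving whenever $\ee\lambda{k+1}$ is too large relative to the local Lipschitz modulus of $\nabla f$. Along any subsequence on which $\ee x{k_j}$ stays in a bounded set, the descent lemma applied to the $C^2$ function $f$ on that set shows that once $\ee\lambda{k+1}$ drops below a threshold determined by the local Lipschitz constant $L_{\rm loc}$ of $\nabla f$, none of \eqref{EqBasGSSN2a}--\eqref{EqBasGSSN2c} can hold, so the doubling step is forced and $\ee\lambda k$ remains bounded away from zero. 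Since $\|\ee zk-\ee xk\|^2=2\ee\lambda k\ee\eta k$ and $\ee\eta k\to 0$, this yields $\|\ee z{k_j}-\ee x{k_j}\|\to 0$ along every bounded subsequence, which immediately implies the coincidence of accumulation points in (ii).

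For (iii), let $\bar x=\lim_j\ee x{k_j}$ be an accumulation point. By (ii), $\ee z{k_j}\to\bar x$, and by the bounds above we may extract a further subsequence with $\ee\lambda{k_j}\to\underline\lambda\in(0,\bar\lambda)$. The first-order condition \eqref{EqFO} produces
    \[
        \ee\xi{k_j}:=-\nabla f(\ee x{k_j})-\tfrac{1}{\ee\lambda{k_j}}\bigl(\ee z{k_j}-\ee x{k_j}\bigr)\in \widehat\partial g(\ee z{k_j})\,,
    \]
with $\ee\xi{k_j}\to -\nabla f(\bar x)$. To promote this to $-\nabla f(\bar x)\in \partial g(\bar x)$, I would verify the cost-continuity $g(\ee z{k_j})\to g(\bar x)$ via the sandwich $\phiFB{k_j}-(1-\alpha)\ee\eta{k_j}\le \varphi(\ee z{k_j})\le \phiFB{k_j}$ together with continuity of $f$, so that $\varphi(\ee z{k_j})\to\bar\varphi$. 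The defining closedness of the limiting subdifferential then gives $-\nabla f(\bar x)\in\partial g(\bar x)$, hence $0\in\partial\varphi(\bar x)$, which is equivalent to $\bar x=T_{\underline\lambda}(\bar x)$.

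I expect the main obstacle to be the quantitative lower bound on $\ee\lambda k$ along accumulating subsequences: the case analysis over which of \eqref{EqBasGSSN2a}--\eqref{EqBasGSSN2c} is active must be carried out carefully and requires a quantitative use of the local Lipschitz continuity of $\nabla f$ to exclude the pathological scenario in which $\ee\lambda k$ collapses while the $\ee xk$ still cluster. The remaining ingredients are compactness, continuity, and closedness arguments in the spirit of standard proximal-gradient convergence proofs.
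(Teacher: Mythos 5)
The paper does not prove this theorem itself but cites \cite[Theorem 4.4]{Gfr24}, so your sketch is measured against the argument underlying that result. Your parts (i) and (ii) are essentially correct: \eqref{EqBasGSSN1a} gives the strict decrease and the dichotomy, Lemma~\ref{LemBasFBE1}(iii) with $\lambda L=\alpha$ gives $\varphi(\ee zk)\le\phiFB{k}-(1-\alpha)\ee\eta k$, hence $\bar\varphi\ge\inf\varphi$ and, by telescoping, $\ee\eta k\to0$; moreover, since the doubling loop is stopped by \eqref{EqBasGSSN2b}, one has $\ee\lambda k\le\max\{\ee\lambda0,\bar\lambda\}$, so $\norm{\ee zk-\ee xk}^2=2\ee\lambda k\ee\eta k\to0$, which already yields (ii) (boundedness of the subsequence is not even needed there).

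The genuine gap is in (iii). First, your ``sandwich'' is stated with the wrong inequality: what is available is $\varphi(\ee z{k})\le\phiFB{k}-(1-\alpha)\ee\eta{k}$, i.e.\ an upper bound, so the cost continuity $g(\ee z{k_j})\to g(\xb)$ is not established (lower semicontinuity only gives $g(\xb)\le\liminf_j g(\ee z{k_j})$, and even showing $g(\ee z{k_j})$ converges to some value does not identify that value with $g(\xb)$ without knowing $\varphi(\xb)\ge\bar\varphi$). Second, and more importantly, even granting cost continuity, closedness of the limiting subdifferential yields only $0\in\partial\varphi(\xb)$, which is strictly weaker than the assertion of the theorem, namely $\xb\in T_{\underline\lambda}(\xb)$ and $0\in\widehat\partial\varphi(\xb)$. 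Your closing claim that $0\in\partial\varphi(\xb)$ ``is equivalent to $\xb\in T_{\underline\lambda}(\xb)$'' is false for nonconvex $g$: membership in $T_{\underline\lambda}(\xb)$ means that $\xb$ \emph{globally} minimizes $z\mapsto\ell_f(\xb,z)+\tfrac1{2\underline\lambda}\norm{z-\xb}^2+g(z)$, and only the implication $\xb\in T_{\underline\lambda}(\xb)\Rightarrow 0\in\widehat\partial\varphi(\xb)$ (noted after \eqref{EqFO}) holds. The correct route is to pass to the limit, for each fixed $z$, in the global inequality $\psi^{\rm FB}_{\ee\lambda{k_j}}(\ee x{k_j},\ee z{k_j})\le\psi^{\rm FB}_{\ee\lambda{k_j}}(\ee x{k_j},z)$, using $\ee x{k_j},\ee z{k_j}\to\xb$, continuity of $f$ and $\nabla f$, lower semicontinuity of $g$, and crucially $\ee\lambda{k_j}\to\underline\lambda>0$; this gives $\xb\in T_{\underline\lambda}(\xb)$ directly and dispenses with cost continuity. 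Finally, for the lower bound $\underline\lambda>0$ your descent-lemma case analysis is the right mechanism, but as written it is incomplete: the degenerate cases in which \eqref{EqBasGSSN2a} holds with $\ee\eta{k+1}=0$ or \eqref{EqBasGSSN2c} holds with $\tilde z=\ee x{k+1}$ give no bound on $\lambda$ and must be treated separately (they encode exact stationarity), and in the case \eqref{EqBasGSSN2c} one must first show that $\tilde z$ remains in the region where the local Lipschitz constant of $\nabla f$ applies, using local boundedness of $\prox_{2\lambda g}$, which is legitimate because \eqref{EqBasGSSN2c} is only invoked when \eqref{EqBasGSSN2b} fails, so $2\ee\lambda{k+1}\le\bar\lambda<\lambda_g$.
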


Next, we combine the BasGSSN method given in Algoritm~\ref{algo_BasGSSN} with the SCD semismooth* Newton method via a suitable choice of the search direction $\ee sk $. For this, note that by the definition of $T_\lambda(\cdot)$  and the first-order optimality condition \eqref{EqFO} we have
    \begin{equation*}
        \ee{z_g^*}k:=-\nabla f(\ee xk)- \frac 1{\ee \lambda k}(\ee zk-\ee xk)\in  \widehat\partial g(\ee zk) \,,
    \end{equation*}
and consequently
    \begin{equation}\label{Eqz*k}
        \ee{z^*}k:=\nabla f(\ee zk)+\ee{z_g^*}k\in\widehat\partial \varphi(\ee zk)\,.
    \end{equation}
It can be shown that the choice $(\ee zk,\ee {z^*}k)$ fulfills the requirement \eqref{EqApprStep} of the approximation step for $\ee xk$. Next, given some subspace $\rge(P_k,W_k)\in \Sp_{P,W}\partial g(\ee zk,\ee{z_g^*}k)$, we can compute
a subspace $\rge(\ee Pk,\ee Wk)\in\Sp_{P,W}\partial\varphi(\ee zk, \ee{z^*}k)$ by means of Lemma~\ref{LemSumRul}. Now, combining the definition of the SCD semismooth* Newton direction (via the quadritic problem \eqref{helper_SCD_quadratic}) with the
requirement $\norm{\ee sk}\leq\bar \rho$ and taking into account Lemma \ref{LemStatPoint}, our search direction $\ee sk$ should ideally satisfy
    \begin{equation*}
        \ee Wk\ee sk=-\ee Pk\ee{z^*}k \,, \qquad \norm{\ee sk}\leq\bar\rho \,.
    \end{equation*}
In general, we do not know whether this system has a solution, and even if a solution exists, it might be very time consuming to compute for large scale problems. Therefore, we consider an approximate solution $\ee sk$ satisfying 
    \begin{equation*}
        \norm{\ee sk}\leq \bar\rho \,,
        \qquad \text{and} \qquad 
        \ee sk\in \rge \ee Pk \,.
    \end{equation*}
Denoting the relative residual of this direction by
    \begin{equation}\label{EqRelRes}
        \ee\xi k:=\frac{\norm{\ee Wk \ee sk+\ee Pk\ee{z^*}k}}{\norm{\ee {z^*}k}} \,,
    \end{equation}
we are now able to state the following convergence results.

\begin{theorem}[{\cite[Theorem~5.1]{Gfr24}}]\label{ThSuperLinConv} Consider the iterates generated by the BasGSSN method defined in Algorithm~\ref{algo_BasGSSN} and assume that $\ee sk\in \rge\ee Pk$ for all $k$. Furthermore, assume that $\varphi(\ee zk)$ is bounded from below and that the sequence $\ee xk$ has a limit point $\xb$ which is a local minimizer for $\varphi$. (Note that due to Theorem~\ref{ThConvBasGSSN}, every accumulation point of $\ee xk$ is a stationary point for $\varphi$.) Moreover, suppose that $\partial\varphi$ is both \SCD regular and \SCD \ssstar at $(\xb,0)$.
If  for every subsequence $\ee x{k_i}\to  \xb$ the corresponding subsequence of relative residuals $\ee \xi {k_i}$ converges to $0$,
then $\ee xk$ converges superlinearly to $\xb$, and for all $k$ sufficiently large there holds $\ee x{k+1}=\ee zk+\ee sk$.
\end{theorem}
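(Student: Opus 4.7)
The plan is to split the proof into three steps: a \emph{Newton error estimate} valid along any subsequence $\ee x{k_i}\to\xb$, an \emph{acceptance argument} showing that the line search eventually takes $\ee\tau k=1$, and finally a \emph{promotion} of subsequential to full-sequence convergence.

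First, by Theorem~\ref{ThConvBasGSSN}(ii) we have $\ee z{k_i}\to\xb$; since $\phiFB k$ is bounded below (via $\varphi(\ee zk)$ bounded below and Lemma~\ref{LemBasFBE1}(ii)) and, by \eqref{EqBasGSSN1a}, non-increasing with decrement at least $\beta(1-\alpha)\ee\eta k$, the telescoping sum forces $\ee\eta k\to 0$. Using \eqref{Eqz*k} and continuity of $\nabla f$, this yields $\ee{z^*}{k_i}\to 0$, so $(\ee z{k_i},\ee{z^*}{k_i})\longsetto{\gph \partial\varphi}(\xb,0)$. Applying the \SCD \ssstar property at $(\xb,0)$ to any subspace $\rge(\ee P{k_i},\ee W{k_i})\in \Sp_{P,W}\partial\varphi(\ee z{k_i},\ee{z^*}{k_i})$ (nonempty by Proposition~\ref{PropPW_Basis}) gives the distance bound $\bdist{(\ee z{k_i}-\xb,\ee{z^*}{k_i}),\rge(\ee P{k_i},\ee W{k_i})}=o(\|(\ee z{k_i}-\xb,\ee{z^*}{k_i})\|)$. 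The \SCD regularity $\Sp\partial\varphi(\xb,0)\subseteq \Z_n^{\rm reg}$, combined with compactness of $\Z_n$ and a closedness argument for $\Sp \partial\varphi$, implies that for $i$ large the block $\ee W{k_i}$ is uniformly invertible on $\rge\ee P{k_i}$. Decoding the distance bound in coordinates via Lemma~\ref{LemStatPoint} and comparing with the actual direction $\ee s{k_i}\in \rge \ee P{k_i}$ of relative residual $\ee\xi{k_i}\to 0$ yields the conditional superlinear estimate $\|\ee z{k_i}+\ee s{k_i}-\xb\|=o(\|\ee z{k_i}-\xb\|)$.

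The main obstacle is the \emph{acceptance} of the unit step by the line search: I need to show that for all $i$ large both \eqref{EqBasGSSN1a} with $\ee\tau{k_i}=1$ and \eqref{EqBasGSSN1b} hold, and that $\ee\lambda k$ remains bounded below by some $\underline\lambda>0$. The key ingredient is that \SCD regularity at $(\xb,0)$ together with local minimality implies isolation of $\xb$ and a local quadratic growth bound for $\varphi$ around $\xb$, which via Lemma~\ref{LemBasFBE1} transfers to $\phiFB k$. A direct comparison using the Newton estimate above then shows that the descent $\phiFB{k_i}-\phiFB{k_i+1}$ available at $\ee\tau{k_i}=1$ dominates the required $\beta(1-\alpha)\ee\eta{k_i}$, so \eqref{EqBasGSSN1a} holds; condition \eqref{EqBasGSSN1b} is satisfied as soon as $\ee\lambda{k_i+1}$ drops below a local Lipschitz threshold of $\nabla f$, and the adaptive bookkeeping in the remark after Algorithm~\ref{algo_BasGSSN} prevents $\ee\lambda k$ from collapsing.

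Finally, to promote subsequential to full-sequence convergence I use a trap argument: the Newton estimate together with continuity of $T_\lambda$ at the fixed point $\xb$ shows that once an iterate $\ee xk$ enters a sufficiently small neighborhood $V$ of $\xb$, the whole tail $\{\ee xj,\ee zj\}_{j\geq k}$ remains in $V$; the hypothesis on the relative residuals then ensures $\ee\xi j\to 0$ along this trapped sequence, and the first step applies at every index. Hence $\ee xk\to\xb$ and $\ee x{k+1}=\ee zk+\ee sk$ for all $k$ large. Since $\|\ee zk-\ee xk\|\leq\sqrt{2\ee\lambda k\ee\eta k}\to 0$ and $\ee\lambda k$ is bounded, $\|\ee zk-\xb\|$ and $\|\ee xk-\xb\|$ are of the same order, so the conditional estimate $\|\ee x{k+1}-\xb\|=o(\|\ee zk-\xb\|)$ yields the claimed superlinear convergence of $\ee xk$ to $\xb$.
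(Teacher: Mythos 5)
A point of reference first: the paper itself gives no proof of this statement — it is quoted verbatim from \cite[Theorem~5.1]{Gfr24} — so your sketch can only be measured against the strategy of that cited proof, with which your three-stage architecture (local Newton error estimate via the \SCD \ssstar property and \SCD regularity, eventual acceptance of the unit step, capture/trap argument) is broadly consistent. Two of your steps, however, are genuine gaps rather than compressible details. The first is the passage from $\norm{\ee zk+\ee sk-\xb}=\oo(\norm{\ee zk-\xb})$ to superlinear convergence of $\ee xk$: this needs $\norm{\ee zk-\xb}=\OO(\norm{\ee xk-\xb})$ (and a companion bound on $\norm{\ee{z^*}k}$), i.e.\ exactly the approximation-step estimate \eqref{EqApprStep} for the pair $(\ee zk,\ee{z^*}k)$, which the paper only asserts ``can be shown''. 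Your substitute argument — $\norm{\ee zk-\ee xk}\to 0$, hence the two distances are ``of the same order'' — is a non sequitur: vanishing of the difference says nothing about the ratio (take $\norm{\ee xk-\xb}=k^{-2}$ and $\norm{\ee zk-\ee xk}=k^{-1}$). For nonconvex prox-bounded $g$ this estimate is a nontrivial consequence of prox-regularity at $\xb$ together with $\ee\lambda k$ being bounded away from zero, and your sketch supplies neither; note also that you already use the lower bound on $\ee\lambda k$ in your first step (to deduce $\ee{z^*}{k_i}\to 0$ from $\ee\eta{k_i}\to 0$) before you address it in the second.

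The second gap is the acceptance argument, which is the heart of the theorem and is asserted rather than proved. Even granting quadratic growth of $\varphi$ near $\xb$ (itself requiring the nontrivial chain from \SCD regularity to strong metric subregularity of $\partial\varphi$ to growth at a local minimizer — none of which is available from the paper's stated toolkit), you must verify that with $\ee x{k+1}=\ee zk+\ee sk$ the test \eqref{EqBasGSSN1a} holds with $\ee\tau k=1$, where the left-hand side is the forward--backward envelope at a \emph{new} parameter $\ee\lambda{k+1}$ evaluated through a new proximal point $\ee z{k+1}$, and simultaneously that \eqref{EqBasGSSN1b} together with \eqref{EqBasGSSN2a}--\eqref{EqBasGSSN2c} can be met without $\ee\lambda{k+1}$ collapsing. ``A direct comparison shows the descent dominates $\beta(1-\alpha)\ee\eta k$'' names the desired conclusion, not an argument: the quantitative interplay between the superlinear estimate, the growth constant, Lemma~\ref{LemBasFBE1}, and the parameters $\alpha,\beta,\sigma$ is precisely where the cited proof does its work. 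The capture argument in your last paragraph is standard once these two ingredients are in place, so the outline is salvageable, but as written the proof is incomplete at its two decisive points.
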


We now discuss how to compute an appropriate search direction $\ee sk$ in step~1 of Algorithm~\ref{algo_BasGSSN}. First of all, note that for improving the efficiency of the method, we require that $\norm{\ee sk}\leq\ee\rho k$, where $\ee \rho k$ is updated by
    \begin{equation*}
        \ee \rho{k+1}=\begin{cases}\max\{\frac 14\ee\rho k,\underline\rho\} \,, &\mbox{if $\ee \tau k<\frac 14$} \,,\\
        \min\{2\ee\rho k,\bar\rho\} \,, &\mbox{if $\ee \tau k=1$ and $\norm{\ee sk}=\ee\rho k$} \,,\\
        \ee \rho k \,, &\mbox{else} \,, 
        \end{cases}
    \end{equation*}
for some $0<\underline\rho<\bar\rho$. This rule helps to reduce the number of backtracking steps to compute $\ee\tau k$ in step~2 of Algorithm~\ref{algo_BasGSSN}. Next, we distinguish between two cases:

\begin{enumerate}
    \item If solving linear systems with $\ee Wk$  is not very time consuming, we can compute $\ee sk$ as follows: If $\ee Wk$ is nonsingular, compute $\ee sk$ as the projection of $-{\ee Wk}^{-1}\ee Pk\ee{z^*}k$ onto $\B_{\ee \rho k}(0)$. Otherwise, take $\ee sk:=-\frac{\ee\rho k}{\norm{\ee Pk\ee{z^*}k}}\ee Pk\ee{z^*}k$.
    \item If we cannot use a direct solver for solving the linear system, we have to use some iterative method. Motivated by Lemma~\ref{LemStatPoint}, we try to find an approximate solution of the subproblem
        \begin{equation}\label{EqTRSubProbl_s}
            \min\frac 12 s^T\ee Wk s+ \skalp{\ee {z^*}k,s}\quad \mbox{subject to}\quad s\in\rge \ee Pk,\ \norm{s}\leq \ee\rho k \,.
        \end{equation}
    To solve this subproblem, consider an $n\times m$ matrix $\ee Zk$, where $m=\dim\rge \ee Pk$, whose columns form a basis for $\rge \ee Pk$ and thus $\ee Pk=\ee Zk\big({\ee Zk}^T\ee Zk\big)^{-1}{\ee Zk}^T$. Then, the problem \eqref{EqTRSubProbl_s} above can be equivalently rewritten as
        \begin{equation}\label{EqTRSubProbl_p}
            \min_{u\in\R^m} \frac 12 u^T {\ee Zk}^T\ee Wk \ee Zk u+\skalp{{\ee Zk}^T\ee {z^*}k,u} \,, \quad\mbox{subject to}\quad\norm{\ee Zk u}\leq \ee\rho k \,.
        \end{equation}
    Any solution $s$ to \eqref{EqTRSubProbl_s} induces a solution $u=\big({\ee Zk}^T\ee Zk\big)^{-1}{\ee Zk}^Ts$ to \eqref{EqTRSubProbl_p} and, vice versa, $s={\ee Zk}u$ is a solution to \eqref{EqTRSubProbl_s} for every solution $u$ to \eqref{EqTRSubProbl_p}. Since \eqref{EqTRSubProbl_p} is a quadratic optimization problems, the use of a CG-method suggests itself. For its application consider a stopping tolerance $\ee{\bar \xi}k:=\chi(\norm{\ee {z^*}k})$, where $\chi:(0,\infty)\to(0,1)$ is a monotonically increasing function with $\lim_{t\downarrow 0}\chi(t)~=~0$. Then, we can apply the (preconditioned) CG-method for solving the system ${\ee Zk}^T\ee Wk \ee Zk u=-{\ee Zk}^T\ee {z^*}k$, generating iterates $u_j,p_j,\alpha_j$, $j=0,1,\ldots,$ with $u_0=0$ and $u_{j+1}=u_j+\alpha_jp_j$. We will stop the CG-method if
        \begin{equation*}
        \begin{split}
            \norm{{\ee Zk}^T\ee Wk \ee Zk u_j+{\ee Zk}^T\ee {z^*}k}\leq \ee{\bar \xi}k 
            &\mbox{ or }\norm{\ee Zk u_{j+1}}\geq\ee\rho k
            \\ &\mbox{ or } \, p_j^T {\ee Zk}^T\ee Wk \ee Zkp_j\leq 0\,.
        \end{split}
        \end{equation*}
    In the first case, we take $\ee sk=\ee Zk u_j$, while in the remaining two cases we can take $\ee sk=\ee Zk (u_j+\alpha p_j)$, where the step size $\alpha>0$ is chosen such that $\norm{\ee sk}=\ee \rho k$, as in the CG-Steihaug method \cite{Stei83}. For a different choice of $\ee sk$ in the latter two cases, we refer to \cite[Algorithm~2]{Gfr24}.
\end{enumerate}

Using this modified CG method, we obtain the following convergence result.

\begin{theorem}[{\cite[Corollary~5.5]{Gfr24}}]\label{ThConvCG} Consider the iterates generated by the BasGSSN method defined in Algorithm~\ref{algo_BasGSSN} with $\ee sk$ computed by the modified CG method as described above, and assume that the condition numbers of the matrices ${\ee Zk}^T\ee Zk$ are uniformly bounded. Furthermore, assume that $\varphi(\ee zk)$ is bounded from below and that the sequence $\ee xk$ has a limit point $\xb$ which is a local minimizer for $\varphi$. Moreover, assume that for any subspace $\rge(P,W)\in \Sp_{P,W}\partial\varphi(\xb,0)$ the matrix $W$ is positive definite and assume that $\partial\varphi$ is \SCD \ssstar at $(\xb,0)$. Then the iterates $\ee xk$ converge superlinearly to $\xb$.
\end{theorem}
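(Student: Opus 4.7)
The plan is to verify the hypotheses of Theorem~\ref{ThSuperLinConv} for the modified CG choice of the search direction $\ee sk$. The boundedness from below of $\varphi(\ee zk)$, the local minimality of $\xb$, and the \SCD \ssstar property of $\partial\varphi$ at $(\xb,0)$ are directly assumed, and $\ee sk\in\rge\ee Pk$ by construction; what remains is (a) to establish \SCD regularity $\Sp\partial\varphi(\xb,0)\subseteq\Z_n^{\rm reg}$, and (b) to show that the relative residuals $\ee\xi{k_i}$ from \eqref{EqRelRes} tend to zero along every subsequence $\ee x{k_i}\to\xb$.

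Item (a) is immediate from the positive-definiteness assumption. For any $L=\rge(P,W)\in\Sp_{P,W}\partial\varphi(\xb,0)$ and $(y^*,0)\in L$ there exists $p$ with $y^*=Pp$ and $Wp=0$; positive definiteness of $W$ forces $p=0$ and hence $y^*=0$. Combined with the $(P,W)$-basis characterization from Proposition~\ref{PropPW_Basis}, this yields $\Sp\partial\varphi(\xb,0)\subseteq\Z_n^{\rm reg}$.

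The main step is (b). By Theorem~\ref{ThConvBasGSSN} we have $\ee z{k_i}\to\xb$ and $\ee{z^*}{k_i}\to 0$. Since every limiting $W$ is positive definite, an outer-semicontinuity argument for $\Sp_{P,W}\partial\varphi$ combined with the uniform bound on the condition numbers of ${\ee Zk}^T\ee Zk$ gives uniform positive definiteness of the reduced Hessians ${\ee Z{k_i}}^T\ee W{k_i}\ee Z{k_i}$ for large $i$, which rules out the CG exit via negative curvature $p_j^T{\ee Zk}^T\ee Wk\ee Zk p_j\leq 0$. The exact reduced minimizer $u^*$ satisfies $\norm{\ee Zk u^*}=\OO(\norm{\ee{z^*}k})\to 0$, so eventually $\norm{\ee Zk u^*}<\underline\rho\leq\ee\rho k$, which also disables the trust-region exit $\norm{\ee Zk u_{j+1}}\geq\ee\rho k$. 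Hence the CG iteration terminates via the tolerance criterion, so the reduced residual $r_j:={\ee Z{k_i}}^T\ee W{k_i}\ee Z{k_i}u_j+{\ee Z{k_i}}^T\ee{z^*}{k_i}$ satisfies $\norm{r_j}\leq\chi(\norm{\ee{z^*}{k_i}})$.

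It remains to transfer this reduced-residual bound to the full relative residual. By property~(iii) of $\ZnP$, for any $s\in\rge\ee Pk$ one has $\ee Wk s\in\rge\ee Pk$, so $\ee Wk\ee sk+\ee Pk\ee{z^*}k\in\rge\ee Pk=\rge\ee Zk$; writing this vector as $\ee Zk v$ and premultiplying by ${\ee Zk}^T$ gives $({\ee Zk}^T\ee Zk)v=r_j$, and the uniform condition-number bound yields $\norm{\ee Wk\ee sk+\ee Pk\ee{z^*}k}\leq C\norm{r_j}\leq C\chi(\norm{\ee{z^*}k})$. Thus $\ee\xi k\leq C\chi(\norm{\ee{z^*}k})/\norm{\ee{z^*}k}$, which vanishes along the subsequence provided $\chi$ is chosen with $\chi(t)=\oo(t)$ as $t\downarrow 0$; Theorem~\ref{ThSuperLinConv} then delivers the claimed superlinear convergence. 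The main obstacle is precisely this last step: the CG tolerance $\chi(\norm{\ee{z^*}k})$ must be sharper than the natural scaling $\norm{\ee{z^*}k}$ appearing in the denominator of the relative residual, which is why both the uniform condition-number bound and the superlinear smallness of $\chi$ are indispensable.
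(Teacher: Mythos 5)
Your overall strategy---reduce everything to Theorem~\ref{ThSuperLinConv} by (a) deducing SCD regularity from the positive definiteness of the matrices $W$ and (b) showing that the relative residuals $\ee\xi k$ vanish along subsequences converging to $\xb$---is the right one (the paper itself offers no proof, only the citation to \cite{Gfr24}), and your step (a) is essentially fine modulo the minor point that Proposition~\ref{PropPW_Basis} guarantees $\Sp\partial\varphi(\xb,0)\subseteq\ZnP$ only at points of $\operatorname{proxreg}\varphi$, so you still owe an argument for subspaces of $\Sp\partial\varphi(\xb,0)$ outside $\ZnP$, if any. The genuine gap is in step (b), at the point where you claim that ``an outer-semicontinuity argument for $\Sp_{P,W}\partial\varphi$'' yields uniform positive definiteness of the reduced Hessians ${\ee Zk}^T\ee Wk\ee Zk$ near $\xb$. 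Convergence of the subspaces $\rge(\ee Pk,\ee Wk)$ in $\Z_n$ does \emph{not} imply convergence, or even boundedness, of the representatives $(\ee Pk,\ee Wk)$: the paper's own Example~\ref{ExEll_q} exhibits graphically convergent data along which the entries of $W$ blow up, and for $\ell_q$ penalties with $q\in(0,1)$ these entries are \emph{negative} on the active set, i.e.\ $W_{ii}=\alpha w_i q(q-1)\vert z_i\vert^{q-2}\to-\infty$ when a component $z_i\neq0$ tends to $0$. So positive definiteness of the limiting $W$'s at $(\xb,0)$ by itself does not exclude arbitrarily bad negative curvature (hence the negative-curvature or trust-region CG exits) at nearby iterates; what saves the argument is that such points are eventually excluded from a graph neighborhood of $(\xb,0)$, because the corresponding subgradient components blow up while $\ee{z^*}k\to0$. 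Making this precise requires using that $(\ee zk,\ee{z^*}k)\to(\xb,0)$ \emph{within} $\gph\partial\varphi$ together with prox-regularity-type structure (e.g.\ the representation $L=\rge\bigl(B,\tfrac1\lambda(I-B)\bigr)$ with $B$ positive semidefinite from Proposition~\ref{PropProxRegularQ}) to obtain a uniform lower curvature bound on $\rge \ee Pk$; a limiting argument on subspaces alone, as you sketch it, does not close this step, and the same uniform bound is also what your estimate $\norm{\ee Zk u^*}=\OO(\norm{\ee{z^*}k})$ silently relies on.

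A second, smaller mismatch concerns your conclusion: your bound $\ee\xi k\leq C\,\chi(\norm{\ee{z^*}k})/\norm{\ee{z^*}k}$ forces you to add the hypothesis $\chi(t)=\oo(t)$, which appears neither in the statement of the theorem nor in the paper's requirements on $\chi$ (monotone, $(0,1)$-valued, $\chi(t)\to0$). This comes from reading the CG test as an \emph{absolute} tolerance; the intended reading, consistent with the notation $\ee{\bar\xi}k$ versus the relative residual $\ee\xi k$ in \eqref{EqRelRes} and with standard inexact-Newton forcing-term arguments, is that the reduced residual is controlled relative to $\norm{\ee{z^*}k}$, in which case $\ee\xi k\lesssim\chi(\norm{\ee{z^*}k})\to0$ and no strengthening of $\chi$ is needed. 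As written, your argument proves a statement with an extra assumption rather than the theorem as stated, so either justify the relative interpretation of the stopping rule or explain why $\chi(t)=\oo(t)$ can be dispensed with.
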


Note that the uniform boundedness of the condition numbers of ${\ee Zk}^T\ee Zk$ can always be satisfied if, instead of arbitrary basis, an orthonormal basis for $\rge \ee Pk$ is chosen in the definition of the matrices $\ee Zk$, since then ${\ee Zk}^T\ee Zk = I$. Furthermore, note that compared with Theorem~\ref{ThSuperLinConv}, we now require a slightly stronger assumption, namely that for every subspace $(P,W)\in\Sp_{P,W}\partial\varphi(\xb,0)$ the matrix $W$ is not only nonsingular but also positive definite. This assumption is fulfilled for so-called \emph{tilt stable} local minimizers, see, e.g., \cite{PolRo98}. Moreover, in many cases the SC derivative
$\Sp\partial\varphi(\xb,0)$ will be a singleton, and in this case the nonsingularity of $W$ also implies its positive definiteness. The regularity condition that $\partial\varphi$ is \SCD \ssstar at $(\xb,0)$ is discussed in Section~\ref{sect_appl_sparsity_TV}.

\section{Application to sparsity and TV regularization}\label{sect_appl_sparsity_TV}

In this section, we explicitly describe the single tasks of the BasGSSN method, i.e., Algorithm~\ref{algo_BasGSSN}, for minimizing Tikhonov functionals $\Tad(x)$ with sparsity promoting ($\ell_p$) regularizers and total variation (TV) penalties, respectively. For ease of presentation, we restrict ourselves to the linear case $F(x)=Ax$, where $A$ is an $m\times n$ matrix, and
    \begin{equation}\label{prob_application}
        \min_{x\in\R^n} \, T_\alpha^\delta(x) := \norm{Ax - y^\delta}_{\R^m}^2 + \alpha R(x) \,,
    \end{equation}
where the penalty term $R$ is defined below depending on the setting (sparsity or TV).

\subsection{Application to sparsity regularization}\label{subsect_appl_sparsity}

Let $(\phi)_{i=1,\ldots,n}$ be an orthogonal basis of $\R^n$. The aim of sparsity regularization is to find an approximate solution of $Ax=y$ with as little nonzero coefficients $\spr{x,\phi_i}$ as possible. For this, the weighted $\ell_p$ norm, $p\in[0,1]$, is typically used as regularizer, i.e.,
    \begin{equation}\label{def_Rspf}
        \Rspf(x):=\sum_{i=1}^n w_i\vert \skalp{x,\phi_i}\vert^p \,,
        \qquad w_i > 0 \,.
    \end{equation}
A summary of the regularization properties of this approach can, e.g., be found in \cite{Scherzer_Grasmair_Grossauer_Haltmeier_Lenzen_2008}. For the minimization of the resulting Tikhonov functional $\Tad$, a number of different methods have been proposed. Among these we mention, e.g., surrogate functional approaches \cite{Ramlau_Teschke_2010,Ramlau_Teschke_2005,Ramlau_Teschke_2006}, nonlinear transformation approaches with Nemskii operators \cite{Ramlau_Zarzer_2012,Zarzer_2009, Hinterer_Hubmer_Ramlau_2020}, Krylov subspace techniques \cite{Buccini_Reichel_2019,Huang_Lanza_Morigi_Reichel_Sgallari_2017,Lanza_Morigi_Reichel_Sgallari_2015}, compressed sensing algorithms \cite{Candes_Romberg_Tao_2006,Daubechies_DeVore_Fornasier_Gunturk_2010,Donoho_Tanner_2005}, conditional gradient and semismooth Newton methods \cite{Bonesky_Bredies_Lorenz_Maass_2007,Bredies_Lorenz_Maass_2009,Griesse_Lorenz_2008}, R-regularized Newton schemes \cite{Hintermueller_Wu_2014}, as well as the well-known ISTA and FISTA algorithms \cite{Beck_Teboulle_2009,Daubechies_Defrise_DeMol_2004}.

For our purposes, let $\Phi$ denote the $n\times n$ matrix whose $i$-th column, $i=1,\ldots,n$,  equals to $\phi_i$. Using the variable transformation $x=\Phi v$, problem \eqref{prob_application} becomes
    \begin{equation}\label{EqTransfProbl}
        \min_{v\in\R^n} \norm{\tilde A v-y^\delta}_{\R^m}^2+\alpha\sum_{i=1}^nw_i\vert v_i\vert^p \,,
    \end{equation}
with $\tilde A=A\Phi$. This problem can be written in the form \eqref{EqOptProbl}, with 
    \begin{equation*}
        f(v)=\norm{\tilde A v - y^\delta}_{\R^m}^2 \,,
        \qquad \text{and} \qquad
        g(v)=\alpha\sum\limits_{i=1}^n w_i\vert v_i\vert^p \,.
    \end{equation*}
We now describe in detail the single tasks of  the BasGSSN method for solving \eqref{EqTransfProbl}:

\begin{enumerate}
    \item[1.] \textbf{Evaluation of $T_\lambda(\cdot)$:} Given $v=\ee vk$ and $\lambda=\ee\lambda k$, we have to compute $z=\ee zk\in T_\lambda(v)$. This means, that we have to compute the gradient $\nabla f(v)=2\tilde A^T(\tilde Av-y^\delta)$ and then to compute
        \begin{equation*}
            z\in\argmin_{z\in\R^n}\Kl{\frac 12\norm{z-\hat v}_{\R^n}^2+\alpha\lambda\sum_{i=1}^nw_i\vert z_i\vert^p} \,,
        \end{equation*}
    where $\hat v=v-\lambda\nabla f(v)$. This problem has a separable structure and can be solved componentwise, i.e., for each $i=1,\ldots,n$ we have to solve the one dimensional problem
        \begin{equation*}
            z_i\in \argmin_{z\in\R}\Kl{\frac 12(z-\hat v_i)^2+\alpha\lambda w_i\vert z\vert^p} \,.
        \end{equation*}
    This subproblem is easy to solve for $p\in\{0,1\}$ and for the values $p\in\{\frac 12,\frac 23\}$ there are still exact formulas available, cf. \cite{CaXuSu13}. For any other value of $p$ we have to apply a numerical procedure to approximate the global minimizer. Due to its special structure, this is still a tractable problem.

    Having determined $z$, we have at our disposal also one subgradient
        \begin{equation*}
            z^*_g =-\frac1\lambda(z-v)-\nabla f(v)\in\partial g(z) \,,
        \end{equation*}
    and one subgradient
        \begin{equation*}
            z^*=\nabla f(z)+z^*_g\in\partial T_\alpha^\delta(z) \,.
        \end{equation*}
    \item[2.] \textbf{Determination of the SCD semismooth* Newton direction $\ee sk$:} Consider the index set $\I=\{i\in{1,\ldots,n}\mv z_i\not=0\}$. According to Examples \ref{ExEll_1}-\ref{ExEll_0}, for computing an element in $\Sp_{P,W}\partial g(z,z^*_g)$ we can take diagonal matrices $P$, $W$ with
        \begin{equation*}
            P_{ii}=
            \begin{cases}
            1 \,, &\mbox{if $i\in \I$ \,,}
            \\
            0 \,, &\mbox{if $i\not\in \I$,}
            \end{cases}
            \qquad
            W_{ii}=
            \begin{cases} 
            0 \,,&\mbox{if $i\in \I$ and $p\in\{0,1\}$} \,, 
            \\
            \alpha w_i p(p-1)\vert z_i\vert^{p-2} \,, &\mbox{if $i\in \I$ and $p\in(0,1)$} \,,
            \\
            1 \,, &\mbox{if $i\not\in \I$} \,,
            \end{cases}
        \end{equation*}
    and obtain $\rge(P,W)\in\Sp_{P,W}\partial g(z,z^*_g)$. Hence, the unit vectors $e_i$, $i\in \I$ form an orthogonal  basis for $\rge P$. Let $\tilde A_\I$ denote the submatrix of $\tilde A$ whose columns are those of $\tilde A$ corresponding to the indices $i\in \I$, let $W_\I$ denote the diagonal matrix with entries $W_{ii}$, $i\in \I$, and let $z^*_\I$ denote the vector with components $z^*_i$, $i\in \I$. Then problem \eqref{EqTRSubProbl_p} now takes the form
        \begin{equation*}
            \min_{u\in\R^\I}\frac 12 u^T(2\tilde A_\I^T\tilde A_\I+W_\I)u+\skalp{z_\I^*,u} \,,\qquad\mbox{ subject to }\norm{u}\leq\rho \,.
        \end{equation*}
    For some (approximate) solution $\bar u$ of this problem, the search direction then is
        \begin{equation*}
            \ee sk=\sum_{i\in \I}\bar u_ie_i \,.
        \end{equation*}
\end{enumerate}

We now discuss SCD regularity for problem \eqref{EqTransfProbl}, which is required in Theorem~\ref{ThSuperLinConv}. For this, let $\bar v$ denote a solution of \eqref{EqTransfProbl}, set $\underline \I:=\{i\in\{1,\ldots,n\}\mv \bar v_i\not=0\}$, and
    \begin{equation*}
        \overline \I:=\underline \I\cup
        \begin{cases}
            \{i\in\{1,\ldots,n\}\mv \bar v_i=0, \vert\bar r_i\vert=\alpha w_i\} \,, &\mbox{if $p=1$} \,,
            \\
            \emptyset \,, &\mbox{if $p\in(0,1)$} \,,
            \\
            \{i\in\{1,\ldots,n\}\mv \bar v_i=\bar r_i=0\} \,, &\mbox{if $p=0$} \,,
        \end{cases}    
    \end{equation*}
where $\bar r:=\nabla f(\bar v)=2\tilde A^T(\tilde A\bar v-y^\delta)$. Then, by taking into account  Examples \ref{ExEll_1}-\ref{ExEll_0}, the subgradient mapping of the objective in \eqref{EqTransfProbl} is SCD-regular at $(\bar v,0)$ if and only if for every index set $\I$ with $\underline \I\subset \I\subset\overline \I$, and with diagonal matrices $P^\I$ and $W^\I$ given by
        \begin{equation*}
            P_{ii}^\I=
            \begin{cases}
            1 \,, &\mbox{if $i\in \I$ \,,}
            \\
            0 \,, &\mbox{if $i\not\in \I$,}
            \end{cases}
            \qquad
            W_{ii}^\I=
            \begin{cases} 
            0 \,,&\mbox{if $i\in \I$ and $p\in\{0,1\}$} \,, 
            \\
            \alpha w_i p(p-1)\vert \bar v_i\vert^{p-2} \,, &\mbox{if $i\in \I$ and $p\in(0,1)$} \,,
            \\
            1 \,, &\mbox{if $i\not\in \I$} \,,
            \end{cases}
        \end{equation*}
the matrix $2P^\I\tilde A^T\tilde AP^\I+W^\I$
is nonsingular. Note that for $p\in\{0,1\}$, this is equivalent to the condition that for $\underline \I\subset \I\subset \overline \I$, the restricted matrices $\tilde A_\I^T\tilde A_\I$ are non-singular.

\begin{remark}
Another common choice for the regularization functional $\Rc$ in \eqref{Tikhonov_infinite} are the norms $\norm{\cdot}_{\BspqRn}$ of some general Besov spaces $\BspqRn$. For $p=q < \infty$, these Besov spaces coincide with the Sobolev-Slobodeckij spaces $W^{s,p}(\Rn)$ as long as $s$ is not an integer, and if $p=q=2$ the Besov spaces $B_{2,2}^s(\Rn)$ coincide with the classical Sobolev spaces $H^s(\Rn)$; see e.g.~\cite{Adams_1970}. While the definition of the Besov norms is somewhat unwieldy, there exists an equivalent characterization using wavelets. For this, let $(\psi_\lambda)_{\lambda\in\Lambda}$ denote an orthonormal wavelet family in $\LtRn$ corresponding to a sufficiently regular multiresolution analysis of $\LtRn$. Then an equivalent norm to $\norm{\cdot}_{\BsppRn}$ is \cite{Adams_Fournier_1977,Meyer_1993,Gerth_Ramlau_2014}
    \begin{equation*}
        \norm{x}_{\BspRn} := \kl{ \sum_{\lambda \in \Lambda} 2^{p \abs{\lambda} \kl{s+n\kl{\frac{1}{2} - \frac{1}{p}}}}\abs{\spr{x,\psi_\lambda}_\LtRn}^p }^\frac{1}{p} \,,
    \end{equation*}
assuming that $s+n\kl{1/2 - 1/p} \geq 0$ and with $\abs{\lambda}$ denoting the scale of the wavelets. Truncating the infinite sum above and discretizing, we thus obtain a regularizer $R$ which 
directly corresponds to the sparsity penalty \eqref{def_Rspf} with suitably chosen weights $w_i$. 
\end{remark}

\subsection{Application to TV regularization}\label{subsect_appl_TV}

The aim of TV regularization is to find approximate solutions of a 2D inverse problem which are piecewise constant. In the infinite dimensional setting \eqref{Fx=y}, the $\BVl$ semi-norm with $l \in \N$ is often chosen as a regularizer for this task \cite{Scherzer_Grasmair_Grossauer_Haltmeier_Lenzen_2008}, i.e., for $\Omega \subset \R^2$,
    \begin{equation*}
		\Rc(x) := \Rl(x) := \abs{D^l x}(\Rt) 
        :=
        \sup\Kl{ \int_\Omega u \kl{ \nabla^l \cdot \boldsymbol{\phi} } \, \Big\vert \, \boldsymbol{\phi} \in C_0^\infty(\Omega;\R^{\mathcal{N}(l)}) \,, \abs{\boldsymbol{\phi}} \leq 1  } \,.
    \end{equation*}
For the minimization of the resulting Tikhonov functionals, a number of approaches have been proposed, including primal-dual algorithms, thresholding methods, Bregman iterations, splitting approaches, nonlocal TV, and quadratic programming based on smooth approximations of the regularizer. For an overview of methods see, e.g., \cite{Mueller_Siltanen_2012}.

In the finite dimensional setting \eqref{Gx=y}, i.e., $Ax=y$, TV regularization is typically realized as follows: Assume that the vector $x$ can be identified with a matrix representation $(x_{ij})$, $j=1,\ldots,n_1$, $i=1,\ldots,n_2$. Then the following regularizer is used:
    \begin{equation}\label{def_RTV}
        R(x) = \RTVf(x):=\sum_{i=1}^{n_2-1}\sum_{j=1}^{n_1-1}\vert x_{i,j+1}-x_{i,j}\vert+\vert x_{i+1,j}-x_{i,j}\vert \,.
    \end{equation}
Unfortunately, no efficient algorithm for the evaluation of the proximal mapping ${\rm prox}_{\lambda R}$ for this functional is known, and thus the BasGSSN method cannot be applied directly. A remedy for this situation is given by an Augmented Lagrangian method (ALM). 

Let $s:=2(n_1-1)(n_2-1)$ and let the $s\times n$ matrix $B$ be such that the vector $Bx$ has the components $x_{i,j+1}-x_{i,j}$ and $x_{i+1,j}-x_{i,j}$, $i=1,\ldots,n_2, j=1,\ldots,n_1-1$. Then the problem of minimizing \eqref{prob_application} is equivalent to the constrained problem
    \begin{equation}\label{EqALMProbl}
        \min_{x,v}\norm{Ax-y^\delta}_{\R^m}^2+\alpha\norm{v}_1 \,, \qquad \mbox{ subject to }Bx=v \,.
    \end{equation}
Given a penalty parameter $\sigma>0$, we can define the augmented Lagrangian
    \begin{equation*}
    \begin{split}
        &\Lag_\sigma:\R^n\times\R^s \times\R^s\to\R \,,
        \\
        &\quad (x,y,v^*) \mapsto \Lag_\sigma(x,y,v^*):=\norm{Ax-y^\delta}_{\R^m}^2+\alpha\norm{v}_1+\skalp{v^*,Bx-v}+\frac\sigma2\norm{Bx-v}_{\R^s}^2 \,.
    \end{split}
    \end{equation*}
With this, a typical ALM for solving \eqref{EqALMProbl} is given as follows:

\begin{algorithm}[\emph{Augmented Lagrangian}]\label{algo_augmented_lagrangian} Given $\ee {v^*}0\in\R^s$, repeat until convergence:
    \begin{itemize}[align = left]
        \item[\textbf{Step~1}:] Select $\ee \sigma k>0$.
        \item[\textbf{Step~2}:] Compute $(\ee xk,\ee vk)\in\argmin_{x,v}\Lag_{\ee \sigma k}(x,v,\ee{v^*}k)$.
        \item[\textbf{Step~3}:] Set $\ee {v^*}{k+1}:=\ee{v^*}k+\ee\sigma k(B\ee xk-\ee vk)$.
    \end{itemize}
\end{algorithm}

A typical update rule for the penalty parameter $\ee \sigma k$, $k>0$, is given by
    \begin{equation*}
        \ee \sigma k=
        \begin{cases}
        \tau\ee\sigma{k-1} \,, & \mbox{if \ $\norm{B\ee xk-\ee vk}_{\R^s}>\xi\norm{B\ee x{k-1}-\ee v{k-1}}_{\R^s}$} \,,
        \\
        \ee\sigma{k-1} \,, &\mbox{else}\,,
        \end{cases}
    \end{equation*}
where $0<\xi<1<\tau$. Tracing back to \cite{He69,Po69}, the ALM is capable of tackling large-scale constrained problems. For some recent developments on this topic, see, e.g,~\cite{Be96, BiMa14}.

For our problem, we now apply the BasGSSN method to solve the auxiliary problem 
    \begin{equation*}
        \min_{x,v}\Lag_{\ee \sigma k}(x,v,\ee{v^*}k) \,.
    \end{equation*}
Given $v^*\in\R^s$ and $\sigma>0$, the augmented Lagrangian can be written as 
    \begin{equation}\label{EqAugmented}
        \Lag_\sigma(x,v,v^*)=f(x,v)+g(x,v) \,,
    \end{equation}
where
    \begin{equation*}
    \begin{split}
        f(x,v) &:=\norm{Ax-y^\delta}_{\R^m}^2+\skalp{v^*,Bx-v}+\frac\sigma2\norm{Bx-v}_{\R^s}^2\,,
        \\
        g(x,v)&:=\alpha\norm{v}_1=\alpha\sum_{i=1}^s\vert v_i\vert \,.
    \end{split}
    \end{equation*}
The detailed single tasks of the BasGSSN method for solving \eqref{EqAugmented} are as follows:

\begin{enumerate}
    \item[1.] \textbf{Evaluation of $T_\lambda(\cdot)$:} Given $(x,v)=(\ee xk,\ee vk)$ and $\lambda=\ee\lambda k$, we have to compute $z=(z_x,z_v)=\ee zk\in T_\lambda(x,v)$. This means that we have to compute the gradients 
        \begin{equation*}
        \begin{split}
            \nabla_x f(v)&=2 A^T(Ax-y^\delta)+B^T(v^*+\sigma(Bx-v)) \,,
            \\
            \nabla_v f(x,v)&=-(v^*+\sigma(Bx-v)) \,,
        \end{split}
        \end{equation*} 
    and then to compute $z_x=x-\lambda\nabla_xf(x,v)$, as well as
        \begin{equation*}
            z_v\in\argmin_{z\in\R^n}\Kl{\frac 12\norm{z-\hat v}_{\R^s}^2+\alpha\lambda\sum_{i=1}^n\vert z_i\vert} \,,
        \end{equation*}
    where $\hat v=v-\lambda\nabla_v f(x,v)$. Thus,
        \begin{equation*}
            z_v=\big(\max\{\vert\hat v_i\vert-\alpha\lambda,0\}{\rm sign\,}(\hat v_i)\big)_{i=1}^s \,.
        \end{equation*}
    Having determined $z$, we have at our disposal also one subgradient,
        \begin{equation*}
        \begin{split}
            &z^*=(z^*_x,z^*_v)
            \\ &= \kl{ \nabla_x f(z)-\frac{(z_x-x)}{\lambda}-\nabla_x f(x,v), \nabla_v f(z)-\frac{(z_v-v)}{\lambda} -\nabla_v f(x,v) } \in\partial T_\alpha^\delta(z) \,.
        \end{split}
        \end{equation*}
    \item[2.] \textbf{Determination of the SCD semismooth* Newton direction $\ee sk$:} In order to determine $\ee sk=(s_x,s_v)$, let $\I=\{i\in{1,\ldots,s}\mv (z_v)_i\not=0\}$. According to Example \ref{ExEll_1}, we can take $P$ and $W$ as diagonal matrices with entries
        \begin{equation*}
            P_{ii}=
            \begin{cases}
            1\,,&\mbox{if $i\in \I$}\,,
            \\
            0\,,&\mbox{if $i\not\in \I$\,,}
            \end{cases}
            \qquad
            W_{ii}=
            \begin{cases}
            0\,,&\mbox{if $i\in \I$}\,,
            \\
            1 \,, &\mbox{if $i\not\in \I$} \,.
            \end{cases}
        \end{equation*}
    Then, the Newton direction can be computed as an (approximate) solution of 
        \begin{equation*}
        \begin{split}
            \min_{s_x,s_v} \, \frac 12\begin{pmatrix}s_x^T& s_v^T\end{pmatrix}
            \begin{pmatrix}2A^TA+\sigma B^TB&-\sigma B^T\\-\sigma B&\sigma I+W\end{pmatrix}
            \begin{pmatrix}s_x\\s_v\end{pmatrix}
            +\skalp{z_x^*,s_x}+\skalp{z^*_v,s_v} \,,
        \end{split}
        \end{equation*}
    subject to the constraints $(s_v)_i=0$ for $i\not\in \I$, and $\norm{(s_x,s_v)}_{\R^n\times\R^s}\leq \rho$. 
\end{enumerate}

As before, we now discuss SCD regularity for problem \eqref{EqAugmented}. For this, let $\bar x, \bar v$ denote a solution of the auxiliary problem \eqref{EqAugmented}, set $\underline \I:=\{i\in\{1,\ldots,s\}\mv \bar v_i\not=0\}$, and
    \begin{equation*}
        \overline \I:=\underline \I\cup\{i\in\{1,\ldots,s\}\mv \vert \bar r_i\vert=\alpha\} \,,
        \qquad \text{where} \qquad 
        \bar r=\nabla_v f(\bar x,\bar v) \,.
    \end{equation*}
Next, for index sets $\underline \I\subset \I\subset\overline \I$, let the $s\times s$ diagonal matrices $P^I,\ W^I$ be defined by
    \begin{equation*}
        P^\I_{ii}=
        \begin{cases}
            1 \,, &\mbox{if $i\in \I$} \,,
            \\
            0 \,, & \mbox{if $i\not\in \I$} \,, 
        \end{cases}
        \qquad \text{and} \qquad
        W^\I=
        \begin{cases}
            0 \,, & \mbox{if $i\in \I$} \,,
            \\
            1\,, &\mbox{if $i\not\in \I$} \,.
        \end{cases}
    \end{equation*}
Then the auxiliary problem is SCD-regular at $\bar x,\bar v$, if and only if the matrix
    \begin{equation*}
        \begin{pmatrix}
            A^TA+\sigma B^TB&-\sigma B^T P^\I\\
            -\sigma P^\I B&\sigma P^\I+W^\I
            \end{pmatrix}
    \end{equation*}
is nonsingular for every index set $\I$ with $\underline \I\subset \I\subset\overline \I$.

\section{Numerical Examples}\label{sect_numerics}

\begin{figure}[ht!]
    \centering
    \includegraphics[width=\textwidth, trim = {8cm 4cm 6.5cm 1cm}, clip=true]{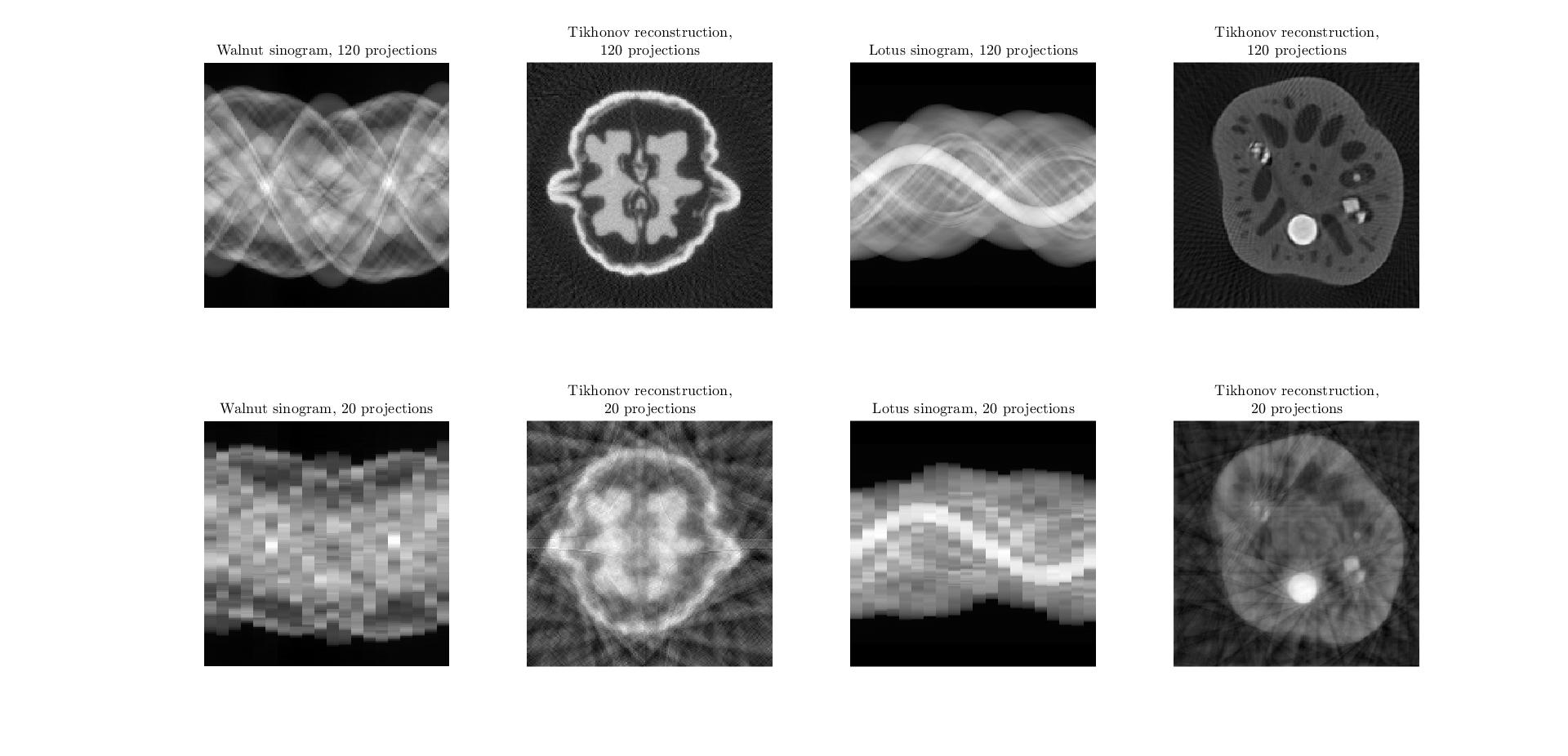}
    \caption{Test data: sinograms and reference reconstructions adapted from \cite{FIP_Lotus_2016,FIP_Walnut_2015}.}
    \label{fig_ground_truth}
\end{figure}

In this section, we illustrate the performance of the SCD semismooth* Newton method on examples from tomographic imaging based on the (2D) Radon transform \cite{Natterer_2001,Louis_1989}
    \begin{equation}\label{Radon_A}
    \begin{split}
        (\mathcal{A} u)(\sigma,\theta) :=  
        \int_\R u(\sigma \omega(\theta) + \tau \omega(\theta)^\perp) \, d\tau \,,
    \end{split}	
    \end{equation}
where $\omega(\theta) = (\cos(\theta),\sin(\theta))^T$ for $\theta \in [0,2\pi)$ and $\sigma \in \R$. For our examples, we use the walnut and lotus datasets recorded by the Finnish Inverse Problems Community \cite{FIP_Lotus_2016,FIP_Walnut_2015}, which contain sinograms of different resolution, as well as corresponding matrix representations $A$ of the Radon transform $\mathcal{A}$. These sinograms, as well as reference reconstructions using standard $\ell_2$ Tikhonov regularization, are depicted in Figure~\ref{fig_ground_truth}.

In our tests, we consider a resolution of $160 \times 160$ and $256 \times 256$ pixels for the unknown density in the walnut and lotus case, respectively. Furthermore, as in \cite{FIP_Lotus_2016,FIP_Walnut_2015}, we consider both $120$ and $20$ uniformly distributed angles (projections), and refer to these cases as the full and limited-angle problems, respectively. For reconstruction, we then consider the Tikhonov functional \eqref{prob_application}, where for the regularizer $R$ we use both the sparsity ($\ell_p$) and TV penalties defined in \eqref{def_Rspf} and \eqref{def_RTV}, respectively. In the sparsity penalty, we use constant weights $w_i=1$, orthonormal Daubechies wavelets \cite{Daubechies_1992} of maximal order for the functions $\phi_i$, as well as the choices $p=1$, $p=0.5$, and $p=0$. Since the aim of these tests is to illustrate the performance of our SCD semismooth* Newton method, and not to establish one regularizer as superior to another, the regularization parameter $\alpha$ in \eqref{prob_application} was chosen manually among a range of tested values. All computations were made in Matlab on a standard desktop computer.

\begin{figure}[ht!]
    \centering
    \includegraphics[width=\textwidth, trim = {8cm 4cm 6.5cm 1cm}, clip=true]{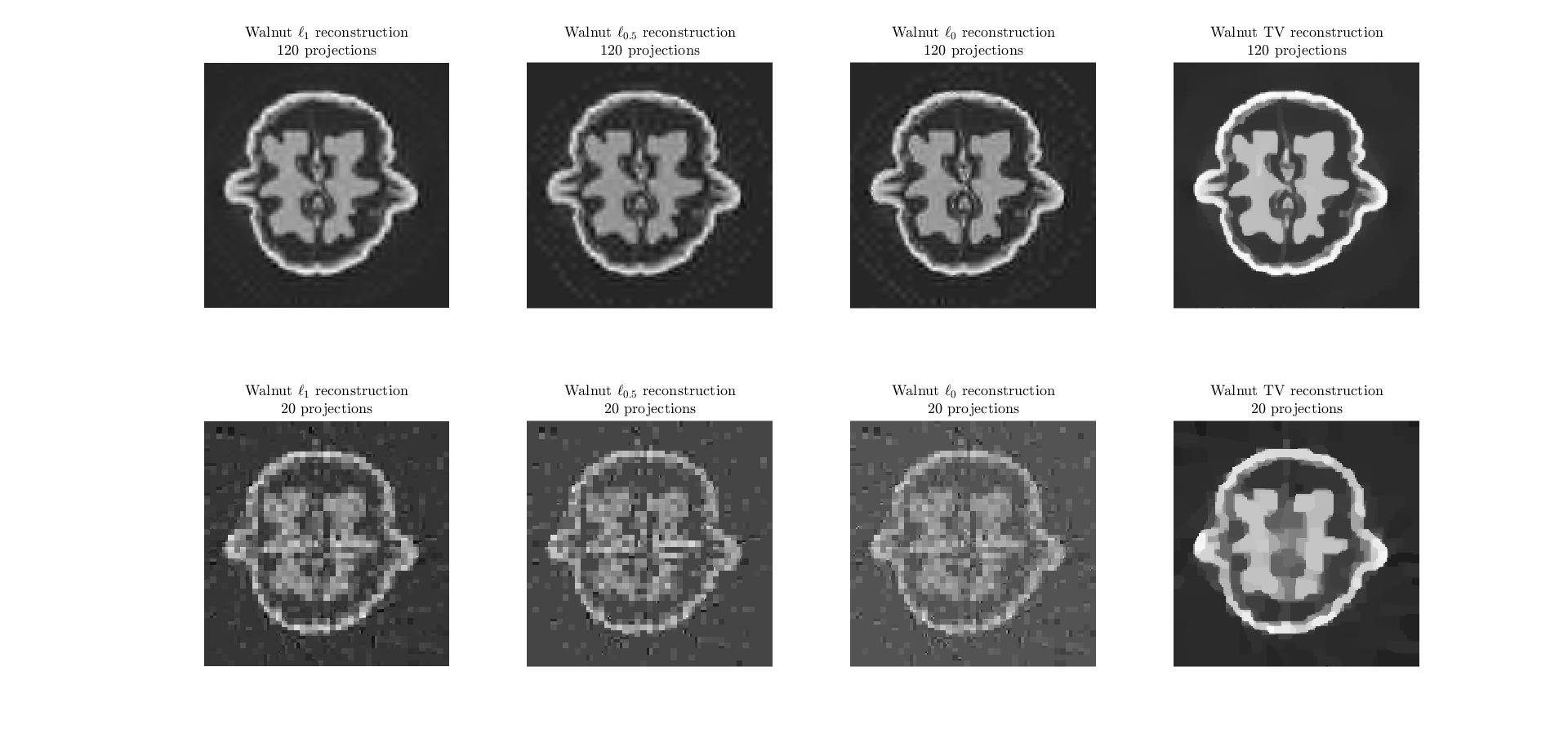}
    \caption{SCD semismooth* Newton reconstructions for the walnut data.}
    \label{fig_walnut_results}
\end{figure}

\begin{figure}[ht!]
    \centering
    \includegraphics[width=\textwidth, trim = {8cm 4cm 6.5cm 1cm}, clip=true]{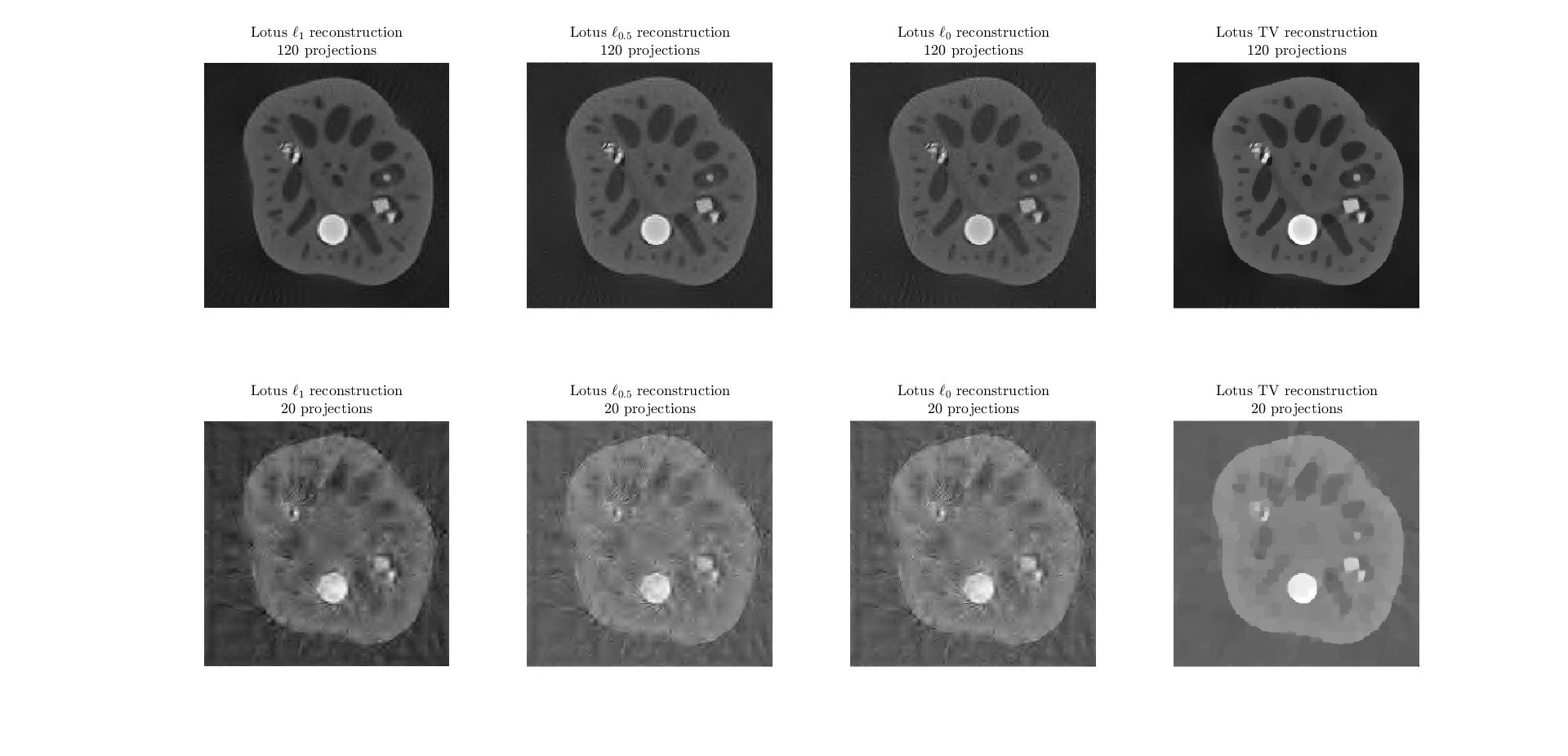}
    \caption{SCD semismooth* Newton reconstructions for the lotus data.}
    \label{fig_lotus_results}
\end{figure}

\begin{figure}[ht!]
    \centering
    \includegraphics[width=\textwidth, trim = {7cm 0.5cm 6cm 0.5cm}, clip=true]{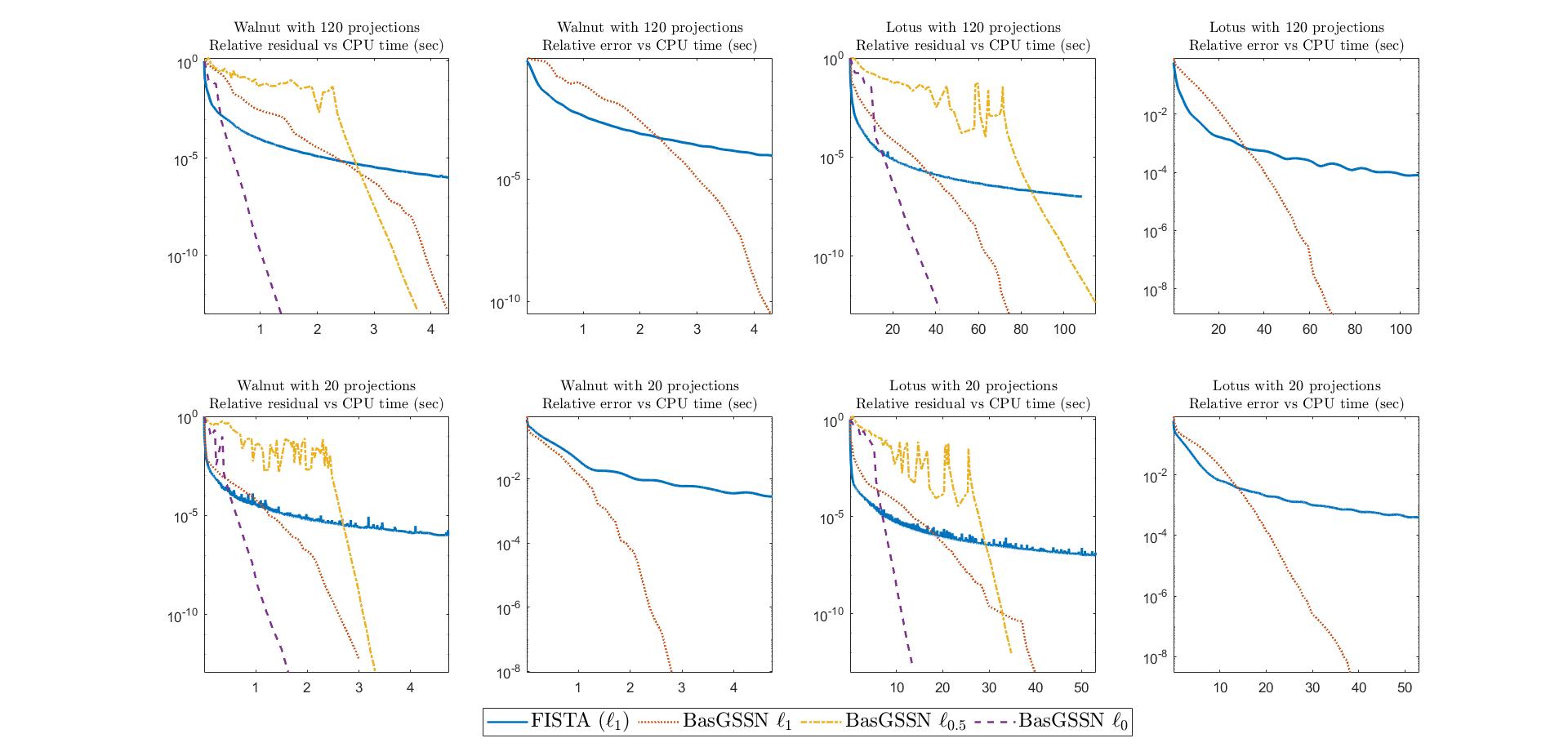}
    \caption{Illustration of the computational efficiency of the SCD semismooth* Newton method for the case of sparsity regularization, compared with classic FISTA (for $\ell_1$) \cite{Beck_Teboulle_2009}.}
    \label{fig_accuracy}
\end{figure}

Figure~\ref{fig_walnut_results} and Figure~\ref{fig_lotus_results} depict the results for the walnut and lotus tests, respectively, obtained with the SCD semismooth* Newton method as described in Section~\ref{sect_semismooth_Tikhonov}. In the full angle case, all regularization methods yield suitable results, with only minor structural differences being visible when comparing the results obtained with the different chosen penalties. As expected, TV regularization leads to piecewise constant reconstructions, which is clearly visible in the limited-angle case, where due to the higher ill-posedness of this setting a larger regularization parameter was chosen compared to the full angle case. This increased ill-posedness is also reflected in the quality of the obtained reconstructions using the sparsity penalty, where the different choices of $p$ perform similarly, but with minor notable differences. Nevertheless, in all cases the SCD semismooth* Newton method is able to accurately minimize the corresponding Tikhonov functional \eqref{prob_application}. This can be seen in Figure~\ref{fig_accuracy}, which illustrates the accuracy of our method in terms of both the relative residual and the relative error (for $p=1$) in the case of sparsity regularization. In particular, note that within the same or shorter computation (CPU) time, the SCD semismooth* Newton method is several orders of magnitude more accurate than the classic FISTA algorithm (for $p=1$) \cite{Beck_Teboulle_2009}.

\section{Conclusion}\label{sect_conclusion}

In this paper, we considered the SCD semismooth* Newton method for the efficient numerical minimization of Tikhonov functionals with non-smooth and non-convex penalty terms. Furthermore, we presented a globalized version of this locally superlinearly convergent method, as well as explicit algorithms for the case of Tikhonov regularization with sparsity and TV promoting regularizers. Finally, we numerically illustrated the performance of these methods on (limited-angle) tomographic imaging problems.

\section{Support}

This research was funded in part by the Austrian Science Fund (FWF) SFB 10.55776/F68 ``Tomography Across the Scales'', project F6805-N36 (Tomography in Astronomy). For open access purposes, the authors have applied a CC BY public copyright license to any author-accepted manuscript version arising from this submission.

\appendix

\section{The SCD derivative of $\ell_p$ norms with $0\leq p \leq 1$}\label{appendix_examples}

\begin{example}\label{ExEll_1}
Let $\varphi(x) := \norm{x}_1=\sum_{i=1}^n\vert x_i\vert$ be the $\ell_1$ norm on $\R^n$. Then,
    \begin{equation*}
        \partial \varphi(x) =  \partial \norm{\cdot}_1(x)=\partial \vert \cdot\vert(x_1)\times\partial \vert \cdot\vert(x_2)\times\ldots\times\partial \vert \cdot\vert(x_n) \,,
    \end{equation*}
and due to \cite[Lemma 3.8]{GfrOutVal22b} there holds
    \begin{align*}
        &\lefteqn{\Sp\partial \norm{\cdot}_1(x,x^*)=}\\& \Kl{\big\{(u,u^*)\in\R^n\times\R^n\mv (u_i,u_i^*)\in L_i,\ i=1,\ldots,n\big\}\Bmv L_i\in\Sp \partial\vert\cdot\vert(x_i,x_i^*),\ i=1,\ldots,n } \,.
    \end{align*}
Now, since
    \begin{equation*}
        \gph \partial \vert\cdot\vert=\big(\R_-\times \{-1\}\big)\cup\big(\{0\}\times[-1,1]\big)\cup\big(\R_+\times\{+1\}\big) \,,
    \end{equation*}
we obtain
    \begin{equation*}
        \OO_{\partial\vert\cdot\vert}=\big(\R_{--}\times \{-1\}\big)\cup\big(\{0\}\times(-1,1)\big)\cup\big(\R_{++}\times\{1\}\big) \,,
  \end{equation*}
as well as
    \begin{equation*}
        \Sp \partial\vert\cdot\vert(t,t^*)=
        \begin{cases}
            \{\R\times \{0\}\} \,, &\mbox{if $(t,t^*)\in \big(\R_{--}\times \{-1\}\big)\cup \big(\R_{++}\times\{+1\}\big)$\,,}
            \\
            \{\{0\}\times\R\} \,, &\mbox{if $(t,t^*)\in\{0\}\times(-1,1)$\,.}\end{cases}   
    \end{equation*}
For the remaining two points $(0, \pm 1)$, we have to take limits of subspaces, which yields
    \begin{equation*}
        \Sp \partial\vert\cdot\vert(0, \pm 1)=\{\R\times \{0\}, \{0\}\times\R\} \,.
    \end{equation*}
Hence, it follows that every $L\in \Sp \partial\norm{\cdot}_1(x,x^*)$ has the representation $L=\rge(P,I-P)$, where $P$ is a diagonal matrix with diagonal entries belonging to $\{0,1\}$.
\end{example}

\begin{example}\label{ExEll_q}
Let $\varphi(x) := \norm{x}_q^q =\sum_{i=1}^n\vert x_i\vert^q$ for $q\in(0,1)$. Then,
    \begin{equation*}
        \partial \varphi(x) = \partial \norm{\cdot}_q^q(x)=\partial \vert \cdot\vert^q(x_1)\times\partial \vert \cdot\vert^q(x_2)\times\ldots\times\partial \vert \cdot\vert^q(x_n) \,,
    \end{equation*}
where 
    \begin{equation*}
        \partial \vert \cdot\vert^q(t)=
        \begin{cases}
            \{q\vert t\vert ^{q-1}{\rm sign\,}(t)\} \,, &\mbox{if $t\not=0$} \,,
            \\ \R \,, &\mbox{if $t=0$} \,.
        \end{cases}     
    \end{equation*}
Hence, we find that
    \begin{equation*}
        \Sp \partial\vert\cdot\vert^q(t,t^*)=\{\rge(p(t),w(t))\}\mbox{ with }(p(t),w(t)):=
        \begin{cases}
            (1,q(q-1)\vert t\vert^{q-2}) \,, &\mbox{if $t\not=0$} \,, 
            \\
            (0,1) \,, &\mbox{if $t=0$} \,.
        \end{cases}    
    \end{equation*}
Again using \cite[Lemma~3.8]{GfrOutVal22b}, we obtain that
    \begin{align*}
        \lefteqn{\Sp\partial \norm{\cdot}_q^q(x,x^*)=}
        \\
        &\Kl{\big\{(u,u^*)\in\R^n\times\R^n\mv (u_i,u_i^*)\in L_i,\ i=1,\ldots,n\big\}\Bmv L_i\in\Sp \partial\vert\cdot\vert^q(x_i,x_i^*),\ i=1,\ldots,n }
    \end{align*}
and thus $\Sp\partial \norm{\cdot}_q^q(x,x^*)$ consists of the single subspace $\rge(P,W)$, where $P$ and $W$ are diagonal matrices satisfying
    \begin{equation*}
        P_{ii}=
        \begin{cases}
            1 \,, &\mbox{if $x_i\not=0$}\,,
            \\
            0 \,, &\mbox{if $x_i=0$} \,,
        \end{cases}
        \qquad \text{and} \qquad
        W_{ii}=
        \begin{cases}
            q(q-1)\vert x_i\vert^{q-2} \,, &\mbox{if $x_i\not=0$} \,,
            \\
            1 \,, & \mbox{if $x_i=0$} \,.
        \end{cases}    
    \end{equation*}
    
Furthermore, note that $\vert\cdot\vert^q$ is prox-regular (and subdifferentially continuous) at every $\bar t\in\R$ for every $\bar t^*\in\partial\vert \cdot\vert^q(\bar t)$. This is clear when $\bar t\not=0$. In the case when $\bar t=0$ and $\bar t^*\in\partial\vert \cdot\vert^q(0)=\R$, we can find $\epsilon>0$ sufficiently small such that 
    \begin{equation*}
        (B_\epsilon(0)\times\B_\epsilon(\bar t^*))\cap \gph \partial\vert\cdot\vert^q=\{0\}\times\B_\epsilon(\bar t^*) \,.
    \end{equation*}
Hence, the only $t,t'$ which we have to consider according to the definition of prox-regularity, are $t=t'=0$ and thus condition \eqref{EqProxRegDef} holds with $\rho=0$. Furthermore, $\vert \cdot\vert^q$ is subdifferentially continuous at $0$ for arbitrary $\bar t^*\in\partial\vert \cdot\vert^q(0)$, since any sequence $(t_k,t_k^*)$ converging in $\gph\partial\vert\cdot\vert^q$ to $(0,\bar t^*)$ satisfies $t_k=0$ for all $k$ sufficiently large.

Note also that the functions $p(\cdot),w(\cdot)$ are not continuous at $0$, and that $w(\cdot)$ is even unbounded. Nevertheless, there holds 
    \begin{equation*}
        \lim_{t\to0} \, \rge(p(t),w(t))=\rge(p(0),w(0)) 
        \qquad \text{in} \, \Z_1\,,
    \end{equation*}
and all subspaces $\rge(p(t),w(t))$ for $t\in\R$ belong to $\Z_1^{\rm reg}$. From this, we can see that the $(P,W)$-basis representation of subspaces can lead to unbounded matrices $W$. However, for the convergence theory of the SCD semismooth* Newton method this is of no relevance. It is only a numerical issue for computing the Newton direction, which we circumvent via scaling in order to avoid ill-conditioned systems of linear equations.
\end{example}

\begin{example}\label{ExEll_0}
Let $\varphi(x) = \norm{x}_0:=\sum_{i=1}^n \xi(x_i)$ be the $\ell_0$ pseudo-norm on $\R^n$, where
    \begin{equation*}
        \xi(t):=\begin{cases}1 \,, &\mbox{if $t\not=0$}\,,\\0 \,, &\mbox{if $t=0$} \,. \end{cases} 
    \end{equation*}
Due to \cite[Proposition 10.5]{RoWe98}, there holds
    \begin{equation*}
        \partial \norm{x}_0=\partial\xi(x_1)\times\partial\xi(x_2)\times\ldots\times\partial\xi(x_n) \,.    
    \end{equation*}
Since $\gph \partial\xi =\big(\R\times\{0\}\big)\cup \big(\{0\}\times\R\big)$, we thus obtain
    \begin{equation*}
        \OO_{\partial\xi}=\big((\R\setminus\{0\})\times\{0\}\big)\cup \big(\{0\}\times(\R\setminus\{0\})\big) \,,    
    \end{equation*}
as well as
    \begin{equation*}
        \Sp\partial\xi(t,t^*)=
        \begin{cases}
            \big\{\R\times\{0\}\big\} \,, &\mbox{if $t\in\R\setminus\{0\}$, $t^*=0$\,,}
            \\
            \big\{\{0\}\times\R\big\} \,, &\mbox{if $t=0$, $t^*\in\R\setminus\{0\}$ \,. }
        \end{cases} 
    \end{equation*}
The SC derivative at $(0,0)$ is again found by a limiting process, which results in
    \begin{equation*}
        \Sp\partial\xi(0,0)=\big\{\R\times\{0\},\{0\}\times\R\} \,.    
    \end{equation*}
With this, we can now conclude together with \cite[Proposition 3.5]{GfrManOutVal22} that
    \begin{align*}
        \lefteqn{\Sp\partial \norm{\cdot}_0(x,x^*)=}
        \\
        &\Kl{\big\{(u,u^*)\in\R^n\times\R^n\mv (u_i,u_i^*)\in L_i,\ i=1,\ldots,n\big\}\Bmv L_i\in\Sp \partial\xi(x_i,x_i^*),\ i=1,\ldots,n}\,.
    \end{align*}
Note that $\xi$ is not subdifferentially continuous at $0$ for $0$ and therefore, $\norm{\cdot}_{0}$ is not subdifferentially continuous at $x$ for $x^*$ whenever $x_i=x_i^*=0$ for some $i$. Nevertheless, for all subspaces $L\in \Sp\partial \norm{\cdot\nobreak}_0(x,x^*)$ we have $L=L^*=\rge(P,I-P)\in\Z_n^{P,W}$, where $P$ is a diagonal matrix with diagonal entries belonging to $\{0,1\}$, i.e., the subspaces have exactly the same form as in case of the $\ell_1$-norm.
\end{example}

\bibliographystyle{plain}
{\footnotesize
\bibliography{mybib}
}

\end{document}